\newtheorem{theorem}{Theorem}
\newtheorem{proposition}{Proposition}
\newtheorem{corollary}{Corollary}
\newtheorem{lemma}{Lemma}
\theoremstyle{remark}
\newtheorem{remark}{Remark}
\newtheorem{definition}{Definition}
\numberwithin{equation}{section}
\author{Michael J.\ Schlosser}
\address{Fakult\"at f\"ur Mathematik, Universit\"at Wien,
Nordbergstra{\ss}e 15, A-1090 Vienna, Austria}
\email{michael.schlosser@univie.ac.at}
\urladdr{http://www.mat.univie.ac.at/{\textasciitilde}schlosse}
\thanks{Partly supported by FWF Austrian Science Fund
grant S9607 (which is part
of the Austrian National Research Network
``Analytic Combinatorics and Probabilistic Number Theory'').}
\title[A noncommutative weight-dependent binomial theorem]
{A noncommutative weight-dependent generalization of the binomial theorem}
\subjclass[2010]{Primary 16T30;
Secondary 05A30, 11B65, 33E05, 33E20}
\keywords{binomial theorem, commutation relations, symmetric functions,
$q$-commuting variables, elliptic-commuting variables,
elliptic binomial coefficient, elliptic hypergeometric series,
Frenkel and Turaev's ${}_{10}V_9$ summation}
\newcommand{\ta}{\theta}
\newcommand{\C}{\mathbb C}
\newcommand{\E}{\mathbb E}
\newcommand{\N}{\mathbb N}
\newcommand{\Z}{\mathbb Z}
\begin{document}

\begin{abstract}
A weight-dependent generalization of the binomial theorem
for noncommuting variables is presented. This result extends the
well-known binomial theorem for $q$-commuting variables by a generic
weight function depending on two integers.
For two special cases of the weight function,
in both cases restricting it to depend only on a single integer,
the noncommutative binomial theorem involves an expansion of
complete symmetric functions, and of elementary symmetric functions,
respectively. Another special case concerns
the weight function to be a suitably chosen elliptic
(i.e., doubly-periodic meromorphic) function, in which case
an elliptic generalization of the binomial theorem is obtained.
The latter is utilized to quickly recover Frenkel and Turaev's
elliptic hypergeometric ${}_{10}V_9$ summation formula, an identity
fundamental to the theory of elliptic hypergeometric series.
Further specializations yield noncommutative binomial theorems
of basic hypergeometric type. 
\end{abstract}

\maketitle

\section{Introduction}\label{secintro}

For an indeterminate $q$, let
$\C_q[x,y]$ be the associative unital algebra over $\C$
generated by $x$ and $y$, satisfying the relation
\begin{equation}\label{qcomm}
yx=qxy.
\end{equation}
$\C_q[x,y]$ can be regarded as a $q$-deformation of
the commutative algebra $\C[x,y]$.
The variables $x,y$ forming $\C_q[x,y]$ are referred to as
{\em $q$-commuting}
variables.

The following binomial theorem for $q$-commuting variables
is well known and usually attributed to M.P.~Sch\"utzenberger~\cite{Sb}.
However, for the case of $x$ and $y$ being $n\times n$ square matrices
with complex entries and $q$ (then necessarily) being a root of unity
(else \eqref{qcomm} cannot be satisfied), a proof, which extends
verbatim to the general case, was already given in 1950
by the Scottish mathematician H.S.A.~Potter~\cite{P}.
For an excellent account of the history of \eqref{qcomm}
in the context of matrix theory, see \cite{HMS}.

\begin{proposition}\label{qbinthm}
The following identity is valid in $\C_q[x,y]$:
\begin{equation}
(x+y)^n=\sum_{k=0}^n\begin{bmatrix}n\\k\end{bmatrix}_{q}x^ky^{n-k}.
\end{equation}
\end{proposition}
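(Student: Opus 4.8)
The plan is to proceed by induction on $n$, the base case $n=0$ being immediate since the right-hand side collapses to the single term $x^0y^0=1=(x+y)^0$. The workhorse of the argument is the Pascal-type recurrence for the $q$-binomial coefficients,
\[
\begin{bmatrix}n\\k\end{bmatrix}_{q}
=\begin{bmatrix}n-1\\k-1\end{bmatrix}_{q}
+q^k\begin{bmatrix}n-1\\k\end{bmatrix}_{q},
\]
which follows directly from the standard product formula for $\begin{bmatrix}n\\k\end{bmatrix}_{q}$ and will be used to recombine terms at the very end of the induction step.

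Before carrying out the induction, I would isolate the one genuinely algebraic ingredient: iterating the defining relation \eqref{qcomm} yields
\[
yx^k=q^k x^k y\qquad(k\geq 0).
\]
This is itself a one-line induction on $k$, each application of $yx=qxy$ moving a single $y$ past a single $x$ and producing one factor of $q$. It is the only place where the commutation relation of $\C_q[x,y]$ enters the argument.

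For the induction step, assume the identity holds for $n-1$ and write $(x+y)^n=(x+y)(x+y)^{n-1}$. Substituting the inductive hypothesis and distributing produces two sums. In the term arising from left-multiplication by $x$ I reindex $k\mapsto k-1$ so that each summand carries the normal-ordered monomial $x^ky^{n-k}$; in the term arising from left-multiplication by $y$ I apply $yx^k=q^kx^ky$ to bring every summand into the same form $x^ky^{n-k}$, picking up a factor $q^k$. Collecting the coefficient of $x^ky^{n-k}$ then yields exactly $\begin{bmatrix}n-1\\k-1\end{bmatrix}_{q}+q^k\begin{bmatrix}n-1\\k\end{bmatrix}_{q}$, which equals $\begin{bmatrix}n\\k\end{bmatrix}_{q}$ by the Pascal recurrence, closing the induction.

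I do not expect a real obstacle here; the only point demanding care is the bookkeeping at the boundaries $k=0$ and $k=n$, where one of the two contributing $q$-binomial coefficients has an out-of-range lower index and must be read as zero. As long as this convention is respected, the recombination of the two sums is clean, and the proof goes through verbatim whether $x,y$ are taken as the formal generators of $\C_q[x,y]$ or as concrete operators (for instance, matrices with $q$ a root of unity) satisfying \eqref{qcomm}.
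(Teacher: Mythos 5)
Your proof is correct, but it runs the induction in the mirror-image direction to the paper's. The paper never proves Proposition~\ref{qbinthm} in isolation: it cites Potter and Sch\"utzenberger, and the proposition follows as the specialization $w(s,t)\equiv q$ of Theorem~\ref{wdbinth}. In that proof the inductive step peels off the \emph{last} factor, $(x+y)^{n+1}=(x+y)^n(x+y)$, pushes the single $x$ leftward through the $y$'s via the lemma \eqref{tbs1} (whose $q$-case reads $y^m x=q^m x y^m$), and accordingly uses the recurrence
$\left[\begin{smallmatrix}n+1\\k\end{smallmatrix}\right]_{q}
=\left[\begin{smallmatrix}n\\k\end{smallmatrix}\right]_{q}
+q^{n+1-k}\left[\begin{smallmatrix}n\\k-1\end{smallmatrix}\right]_{q}$,
which is \eqref{recw} with $W(k,n+1-k)=q^{n+1-k}$. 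You instead peel off the \emph{first} factor, $(x+y)^n=(x+y)(x+y)^{n-1}$, push a single $y$ rightward via $yx^k=q^kx^ky$, and use the other Pascal recurrence
$\left[\begin{smallmatrix}n\\k\end{smallmatrix}\right]_{q}
=\left[\begin{smallmatrix}n-1\\k-1\end{smallmatrix}\right]_{q}
+q^{k}\left[\begin{smallmatrix}n-1\\k\end{smallmatrix}\right]_{q}$.
For $q$-commuting variables the two routes are equally clean, and your boundary conventions are handled correctly, so the argument is complete. The difference only becomes substantive at the level of the paper's generalization: the weight-dependent binomial coefficients of \eqref{wbineq} satisfy just the single recursion \eqref{recw} and are in general \emph{not} symmetric under $k\leftrightarrow n-k$ (as the paper stresses), so only the right-hand decomposition, in which the new letter is commuted leftward through a block of $y$'s to produce the big weight $W(k,n+1-k)$, survives the passage to generic weights; a left-hand decomposition like yours would require a second, ``$q^k$-type'' recurrence that has no analogue for arbitrary $w(s,t)$.
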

Here,
\begin{equation}
\begin{bmatrix}n\\k\end{bmatrix}_{q}:=\frac{(q;q)_n}{(q;q)_k(q;q)_{n-k}}
\end{equation}
is the {\em $q$-binomial coefficient}, defined for
nonnegative integers $n$ and $k$ with $n\ge k$, where, for
an indeterminant $a$, the {\em $q$-shifted factorial} is defined as
\begin{equation}
(a;q)_k:=(1-a)(1-aq)\dots(1-aq^{k-1})\qquad
\text{for}\quad k=0,1,2,\dots.
\end{equation}

Proposition~\ref{qbinthm} plays an important role in the theory of
quantum groups and $q$-special functions, see \cite{Kl,K}.

While the author's initial goal was to generalize
Proposition~\ref{qbinthm} to the ``elliptic case'', investigations
led to the discovery of a yet more general result,
a noncommutative binomial theorem involving
a generic weight function that depends on two integers. 

This paper is organized as follows.
In Section~\ref{secgenw} a noncommutative algebra is introduced
for a generic weight function depending on two integers.
The three commutation relations which define this algebra
are responsible for the validity of the noncommutative
binomial theorem. The chosen weight function uniquely determines
the corresponding binomial coefficients. These appear as coefficients
in the expansion of the noncommutative binomial theorem in
Theorem~\ref{wdbinth}, which is the main result of this paper.
This result can also be very nicely combinatorially interpreted
in terms of weighted lattice paths.
By multiplying two instances of the binomial theorem and suitably
taking coefficients, a convolution formula for the weight-dependent
binomial coefficients is deduced, while two other convolution formulae
are derived by means of the combinatorics of weighted lattice paths.
Section~\ref{secsym} focusses on two specific choices of the
weight function where (as is well-known) the binomial coefficients
become symmetric functions, namely complete symmetric
functions and elementary symmetric functions, respectively.
However, the noncommutative binomial theorems involving the
complete and elementary symmetric functions in
\eqref{hrel}/\eqref{symhbthm} and \eqref{erel}/\eqref{symebthm}
already appear to be new (which is quite surprising, given
its simplicity).
The situation is particularly interesting in
Section~\ref{secellbincoeffs} where an {\em elliptic}
(i.e., doubly-periodic meromorphic) weight function
is considered. In this case the noncommutative algebra can be
very elegantly described in terms of shifts on two of the variables.
The four variables forming this algebra are referred to as
{\em elliptic-commuting}
variables, while the coefficients appearing in the binomial
expansion of these variables are {\em elliptic binomial coefficients}
(which in fact are even totally elliptic functions).
The convolution formula for the latter turns out to be a variant
of Frenkel and Turaev's elliptic hypergeometric ${}_{10}V_9$
summation formula, an identity fundamental to the theory of
elliptic hypergeometric series.
While this suggests that the noncommutative elliptic binomial
theorem should be useful in the theory of elliptic hypergeometric
series, related elliptic special functions, and elliptic quantum groups
(see \cite[Sec.~11]{GR}, \cite{Sp2}, and \cite{ES}, respectively),
it is possible that the more general weight-dependent result in
Theorem~\ref{wdbinth}, or at least its symmetric function
specializations in \eqref{hrel}/\eqref{symhbthm} and
\eqref{erel}/\eqref{symebthm}, will be similarly
useful, e.g.\ in symmetric function theory in general or, speculatively,
even in the construction of quantum groups involving symmetric functions.
A further challenge of combinatorial-theoretic nature consists in finding
possible {\em weight-dependent} noncommutative extensions of MacMahon's
Master Theorem (which would maybe generalize the results of \cite{KP}).
Already working out an {\em elliptic} extension of MacMahon's
Master Theorem would be an exciting achievement.
In Appendix~\ref{appa} the whole set-up of Section~\ref{secgenw}
is extended by introducing an additional weight function, again
depending on two integers. This leads to a further extension of the
noncommutative algebra and corresponding binomial theorem by which
one in principle is able to consider more general cases
(which however is not pursued further in this paper).
Finally, in Appendix~\ref{appb}, particularly attractive
basic hypergeometric specializations of the elliptic case are
considered and made explicit. (This section may serve as a teaser.
Some readers, who are familiar with basic hypergeometric series,
may enjoy looking at Subsections~\ref{appbbal} and \ref{appbvwp}
first, and verify, say, the $n=2,3$ cases of the binomial theorems
in \eqref{bthbal} and \eqref{bthvwp} by hand.)

\smallskip
{\em Acknowledgements.} I would like to thank Tom Koornwinder for
private discussions on the problem of finding an elliptic extension of
the binomial theorem for $q$-commuting variables. These discussions
took place during the workshop on ``Elliptic integrable systems,
isomonodromy problems, and hypergeometric functions'' at the
Max Planck Institute for Mathematics in Bonn, July 21--25, 2008.
I would further like to thank Tom for his continued interest
and encouragement. I would also like to thank Johann Cigler for
fruitful discussions on the noncommutative binomial theorem.

The main results of this paper were presented at the
``Discrete Systems and Special Functions'' workshop
at the Isaac Newton Institute for Mathematical Sciences in Cambridge,
June~29 -- July~3, 2009. (The elliptic case was already presented
at several occasions before, the first time at a seminar at
Nagoya University on September 3, 2008.)
I am indebted to the organizers of both of these meetings
(Yu.I.~Manin, M.~Noumi, E.M.~Rains, H.~Rosengren, V.P.~Spiridonov, and
P.~Clarkson, R.~Halburd, M.~ Noumi, A.~O.~Daalhuis, respectively)
for inviting me to these workshops which have been highly
stimulating.

\section{Weight-dependent commutation relations
 and binomial theorem}\label{secgenw}

\subsection{A noncommutative algebra}\label{subsecncalg}
Let $\N$ and $\N_0$ denote the sets of positive
and nonnegative integers, respectively.
We will work in the following noncommutative algebra.
(A slight extension of this algebra is considered in Appendix~\ref{appa}.)

\begin{definition}\label{defwa}
For a doubly-indexed sequence of indeterminates $(w(s,t))_{s,t\in\N}$
let $\C_w[x,y]$ be the associative unital algebra over $\C$
generated by $x$ and $y$, satisfying the following three relations:
\begin{subequations}\label{defwaeq}
\begin{align}\label{1strel}
yx&=w(1,1)\,xy,\\\label{2ndrel}
x\,w(s,t)&=w(s+1,t)\,x,\\\label{3rdrel}
y\,w(s,t)&=w(s,t+1)\,y,
\end{align}
\end{subequations}
for all $(s,t)\in\N^2$.
\end{definition}

We refer to the $w(s,t)$, and more generally,
products (and even polynomials) of the $w(s,t)$,
as ``weights'' (in consideration of
the combinatorial interpretation in Subsection~\ref{secwdbc}).
Notice that for $w(s,t)=q$, for all $s,t\in\N$, for an indeterminate $q$,
$\C_w[x,y]$ reduces to $\C_q[x,y]$, the algebra of two
$q$-commuting variables considered in the Introduction.

The relations in \eqref{defwaeq} for generic weights $w(s,t)$
are indeed well-defined. Define the {\em canonical form} of an element
in $\C_w[x,y]$ to be of the form
\begin{equation*}
\sum_{k,l\ge 0}c_{k,l}x^ky^l,
\end{equation*}
where finitely many $c_{k,l}\neq0$.
Here the coefficients $c_{k,l}$ are elements of
$\C[(w(s,t))_{s,t\in\N}]$, the polynomial ring over $\C$
of the indeterminates $(w(s,t))_{s,t\in\N}$.
Since, according to \eqref{2ndrel} and \eqref{3rdrel}, $x$ and $y$
act as independent shift operators on the components of the weights,
it is straightforward to verify that the canonical form of an arbitrary
expression in $\C_w[x,y]$ is unique.
(In the terminology of \cite{B}, all elements of $\C_w[x,y]$ are
reduction-unique.)
This follows by induction (on the minimal number of commutation
relations needed to bring an expression into canonical form)
and observing that the simple expression $yx\,w(s,t)$
reduces to a unique canonical form regardless in which order
(e.g., $y$ and $x$, or $x$ and $w(s,t)$, are swapped first, etc.)
the commutation relations are applied for this purpose.

\subsection{Weight-dependent binomial coefficients}\label{secwdbc}

As before, we consider a doubly-in\-dexed sequence of indeterminate weights
$(w(s,t))_{s,t\in\N}$. For $s\in\N$ and $t\in\N_0$, we write
\begin{subequations}
\begin{equation}
W(s,t):=\prod_{j=1}^tw(s,j)
\end{equation}
(the empty product, which occurs when $t=0$, being defined as $1$)
for brevity. Note that for $s,t\in\N$ we have
\begin{equation}\label{w2W}
w(s,t)=\frac{W(s,t)}{W(s,t-1)}.
\end{equation}
\end{subequations}
To distinguish, we refer to the $W(s,t)$ as {\em big} weights,
and to the $w(s,t)$ as {\em small} weights.

Let the {\em weight-dependent binomial coefficients}
be defined by
\begin{subequations}\label{wbineq}
\begin{align}\label{recu}
&{}_{\stackrel{\phantom w}{\stackrel{\phantom w}w}}\!\!
\begin{bmatrix}0\\0\end{bmatrix}=1,\qquad
{}_{\stackrel{\phantom w}{\stackrel{\phantom w}w}}\!\!
\begin{bmatrix}n\\k\end{bmatrix}=0
\qquad\text{for\/ $n\in\N_0$, and\/
$k\in-\N$ or $k>n$},\\
\intertext{and}
\label{recw}
&{}_{\stackrel{\phantom w}{\stackrel{\phantom w}w}}\!\!
\begin{bmatrix}n+1\\k\end{bmatrix}=
{}_{\stackrel{\phantom w}{\stackrel{\phantom w}w}}\!\!
\begin{bmatrix}n\\k\end{bmatrix}
+{}_{\stackrel{\phantom w}{\stackrel{\phantom w}w}}\!\!
\begin{bmatrix}n\\k-1\end{bmatrix}
\,W(k,n+1-k)
\qquad \text{for $n,k\in\N_0$}.
\end{align}
\end{subequations}
The more general
{\em double} weight-dependent binomial coefficients involving
two generic weight functions are defined in Equation \eqref{vwbineq}.
To avoid possible misconception, it should be stressed that the
weight-dependent binomial coefficients in \eqref{wbineq} in general do
{\em not} satisfy the symmetry ${}_{\stackrel{\phantom w}w}\!
\left[\begin{smallmatrix}n\\k\end{smallmatrix}\right]=
{}_{\stackrel{\phantom w}w}\!
\left[\begin{smallmatrix}n\\n-k\end{smallmatrix}\right]$.
(In case they are symmetric, a second recursion from \eqref{recw}
can be immediately deduced, from which together with \eqref{recw}
a closed product formula for the weight-dependent binomial
coefficients can be derived.)
It follows from \eqref{wbineq} immediately by induction that
\begin{equation}\label{bordercond}
{}_{\stackrel{\phantom w}{\stackrel{\phantom w}w}}\!\!
\begin{bmatrix}n\\0\end{bmatrix}=
{}_{\stackrel{\phantom w}{\stackrel{\phantom w}w}}\!\!
\begin{bmatrix}n\\n\end{bmatrix}=1\qquad
\text{for\/ $n\in\N_0$}.
\end{equation}

The big weights $W(s,t)$ and the weight-dependent
binomial coefficients
${}_{\stackrel{\phantom w}w}\!
\left[\begin{smallmatrix}n\\k\end{smallmatrix}\right]$
have an elegant combinatorial
interpretation in terms of {\em weighted lattice paths}.
Consider lattice paths in the planar integer lattice consisting
of positively directed unit vertical and horizontal steps.
Such paths can be ``enumerated'' with respect to a generic weight
function $w$. In particular, assign the big weight $W(s,t)$
to each horizontal step $(s-1,t)\to(s,t)$

\centerline{
\unitlength2cm
\begin{picture}(1.1,.8)
\linethickness{1pt}
\put(0,0.4){\circle*{.1}}
\put(.05,0.3){\makebox(0,0)[tr]{{\tiny $(s-1,t)$}}}
\put(0,0.4){\line(1,0){1.2}}
\put(1.2,0.4){\circle*{.1}}
\put(1.25,0.3){\makebox(0,0)[tl]{{\tiny $(s,t)$}}}
\put(.6,0.45){\makebox(0,0)[b]{$W(s,t)$}}
\end{picture}}

\noindent and (for the moment) assign weight $1$
to each vertical step $(s,t-1)\to(s,t)$. (We will consider the more
general case of having an additional weight function $v$ defined
on the vertical steps in Appendix~\ref{appa}.) Further, define
the weight $w(P)$ of a path $P$ to be the product of the weights of
all its steps. For instance,  the following path $P_0(A\to\varOmega)$
from $A=(0,0)$ to $\varOmega=(5,2)$

\centerline{
\unitlength.6cm
\begin{picture}(6,5.3)
\linethickness{1.2pt}
\put(0,1.1){\circle*{.35}}
\put(5,3.1){\circle*{.35}}
\put(-.3,.8){\makebox(0,0)[tr]{$A$}}
\put(5.2,3.4){\makebox(0,0)[bl]{$\varOmega$}}
\multiput(0,1.1)(1,0){7}{\circle*{.2}}
\multiput(0,2.1)(1,0){7}{\circle*{.2}}
\multiput(0,3.1)(1,0){7}{\circle*{.2}}
\multiput(0,4.1)(1,0){7}{\circle*{.2}}
\put(0,1.1){\line(1,0){2}}
\put(2,1.1){\line(0,1){1}}
\put(2,2.1){\line(1,0){2}}
\put(4,2.1){\line(0,1){1}}
\put(4,3.1){\line(1,0){1}}
\linethickness{.5pt}
\put(-.8,1.1){\line(1,0){7.6}}
\put(0,.3){\line(0,1){4.6}}
\put(-2,3.1){\makebox(0,0)[r]{$P_0:$}}
\end{picture}}

\noindent has weight
$$w(P_0)=1\cdot 1\cdot W(3,1)\cdot W(4,1)\cdot W(5,2)=
w(3,1)w(4,1)w(5,1)w(5,2).$$
(Equivalently, this corresponds to picking up the weights
$w(s,t)$ for each of the points $(s,t-1)$ strictly below the path,
for $s,t\ge 1$).
We will come back to this specific example shortly after the
proof of Theorem~\ref{wdbinth}.

Given two points $A,\varOmega\in\N_0^2$, let $\mathcal P(A\to\varOmega)$
be the set of all paths from $A$ to $\varOmega$. Further, define
\begin{equation*}
w(\mathcal P(A\to\varOmega)):=\sum_{P\in \mathcal P(A\to\varOmega)} w(P)
\end{equation*}
to be the {\em generating function} with respect to the
weight function $w$ of all paths from $A$ to $\varOmega$.
Now it is clear that
\begin{equation}\label{lpwbc}
w(\mathcal P((0,0)\to (k,n-k)))=
{}_{\stackrel{\phantom w}{\stackrel{\phantom w}w}}\!\!
\begin{bmatrix}n\\k\end{bmatrix}.
\end{equation}
Indeed, the path generating function satisfies the same
relations as the binomial coefficients in \eqref{wbineq}.
The initial conditions \eqref{recu} being clear,
the validity of the recursion \eqref{recw} stems from the fact
that the final step of a path ending in $(k,n+1-k)$ is either
{\em vertical} or {\em horizontal}.

In \cite{S}, lattice paths in $\Z^2$ were enumerated with respect to
the specific {\em elliptic} weight function
$w(s,t)=w_{a,b;q,p}(s,t)$ as defined in \eqref{wdef},
giving as generating functions
the (closed form) elliptic binomial coefficients
$\left[\begin{smallmatrix}n\\k\end{smallmatrix}\right]_{a,b;q,p}$
in \eqref{ellbc}. It was exactly this result which inspired the
search for the algebra $\C_w[x,y]$ and the binomial theorem
in Theorem~\ref{wdbinth}. We will take a closer look at
the elliptic case in Section~\ref{secellbincoeffs}.

\subsection{Noncommutative binomial theorem}

We have the following elegant result:
\begin{theorem}[Weight-dependent binomial theorem]\label{wdbinth}
Let $n\in\N_0$. Then the following identity is valid in $\C_w[x,y]$:
\begin{equation}\label{wdbinthid}
(x+y)^n=\sum_{k=0}^n\,
{}_{\stackrel{\phantom w}{\stackrel{\phantom w}w}}\!\!
\left[\begin{matrix}n\\k\end{matrix}\right]x^ky^{n-k}.
\end{equation}
\end{theorem}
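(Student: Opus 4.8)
The plan is to argue by induction on $n$. The base case $n=0$ is immediate: $(x+y)^0=1$ agrees with the single term ${}_w\!\left[\begin{smallmatrix}0\\0\end{smallmatrix}\right]x^0y^0=1$ coming from \eqref{recu}. For the inductive step I would assume \eqref{wdbinthid} for a given $n$ and expand $(x+y)^{n+1}=(x+y)^n(x+y)$ by multiplying the induction hypothesis on the \emph{right} by $(x+y)$. This produces two sums: $\sum_{k=0}^n {}_w\!\left[\begin{smallmatrix}n\\k\end{smallmatrix}\right]x^ky^{n-k+1}$, which is already in canonical form and will supply the ``vertical-step'' part of the recursion, and $\sum_{k=0}^n {}_w\!\left[\begin{smallmatrix}n\\k\end{smallmatrix}\right]x^ky^{n-k}x$, whose typical monomial must still be reduced to canonical form.

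The heart of the matter is the reduction of $x^ky^{n-k}x$. Here I would use that, by \eqref{2ndrel} and \eqref{3rdrel}, the generators $x$ and $y$ act as independent shift operators on the weight indices, so that sweeping a single $x$ leftwards through a power of $y$ manufactures a big weight. Concretely, iterating \eqref{1strel} and \eqref{3rdrel} gives $y^mx=W(1,m)\,xy^m$, and then iterating \eqref{2ndrel} to move the weight past $x^k$ gives $x^kW(1,m)=W(k+1,m)\,x^k$; combining these with $m=n-k$ yields the single clean identity
\begin{equation*}
x^ky^{n-k}x=W(k+1,n-k)\,x^{k+1}y^{n-k}.
\end{equation*}
Each of the two intermediate relations is itself a one-line induction on the relevant exponent, so I would either isolate them as a small preparatory lemma or verify them inline.

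With this in hand the second sum becomes $\sum_{k=0}^n {}_w\!\left[\begin{smallmatrix}n\\k\end{smallmatrix}\right]W(k+1,n-k)\,x^{k+1}y^{n-k}$; reindexing $k\mapsto k-1$ rewrites it as $\sum_{k=1}^{n+1}{}_w\!\left[\begin{smallmatrix}n\\k-1\end{smallmatrix}\right]W(k,(n+1)-k)\,x^ky^{(n+1)-k}$, where I have used $n-k+1=(n+1)-k$. Adding the two sums and collecting the coefficient of $x^ky^{(n+1)-k}$ for each $0\le k\le n+1$ (the extremal values $k=0$ and $k=n+1$ being covered by the vanishing conventions in \eqref{recu}) produces exactly ${}_w\!\left[\begin{smallmatrix}n\\k\end{smallmatrix}\right]+{}_w\!\left[\begin{smallmatrix}n\\k-1\end{smallmatrix}\right]W(k,(n+1)-k)$, which by the defining recursion \eqref{recw} equals ${}_w\!\left[\begin{smallmatrix}n+1\\k\end{smallmatrix}\right]$. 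This closes the induction.

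I expect the only genuine obstacle to be the bookkeeping of the weight shifts: one must check that $x$ and $y$ commute past \emph{polynomials} in the weights by shifting indices factor-by-factor, and, crucially, that the big weight emerging from the reduction of $x^ky^{n-k}x$ carries precisely the arguments $W(k,(n+1)-k)$ demanded by \eqref{recw} after reindexing. Everything else is routine. As a sanity check, the whole argument is mirrored by the lattice-path recursion behind \eqref{lpwbc}: multiplying on the right by $x$ corresponds to appending a final horizontal step, which is exactly where the factor $W(k,(n+1)-k)$ enters.
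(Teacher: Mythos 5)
Your proposal is correct and follows essentially the same route as the paper's proof: induction on $n$, with the inductive step hinging on exactly the same key commutation identity $x^ky^{n-k}x=W(k+1,n-k)\,x^{k+1}y^{n-k}$, established in the same way (iterating \eqref{1strel} and \eqref{3rdrel} to get $y^m x = W(1,m)\,xy^m$, then \eqref{2ndrel} to shift the weight past $x^k$). The only difference is cosmetic: you expand the left-hand side $(x+y)^n(x+y)$ and collect coefficients to match the recursion \eqref{recw}, whereas the paper starts from the right-hand side, applies \eqref{recw}, and reduces to the same identity.
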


\begin{proof}
We proceed by induction on $n$. For $n=0$ the formula is trivial.
Now let $n>0$ ($n$ being fixed) and assume that we have already
shown the formula
for all nonnegative integers $\le n$. We need to show
\begin{equation}
(x+y)^{n+1}=\sum_{k=0}^{n+1}\,
{}_{\stackrel{\phantom w}{\stackrel{\phantom w}w}}\!\!
\begin{bmatrix}n+1\\k\end{bmatrix}x^ky^{n+1-k}.
\end{equation}
By the recursion formula \eqref{recw},
the right-hand side is
\begin{align*}
\sum_{k=0}^{n+1}\,
&{}_{\stackrel{\phantom w}{\stackrel{\phantom w}w}}\!\!
\begin{bmatrix}n\\k\end{bmatrix}x^ky^{n+1-k}
+\sum_{k=0}^{n+1}\,
{}_{\stackrel{\phantom w}{\stackrel{\phantom w}w}}\!\!
\begin{bmatrix}n\\k-1\end{bmatrix}
W(k,n+1-k)\,x^ky^{n+1-k}\\
=&\sum_{k=0}^n\,
{}_{\stackrel{\phantom w}{\stackrel{\phantom w}w}}\!\!
\begin{bmatrix}n\\k\end{bmatrix}x^ky^{n-k}y
+\sum_{k=0}^n\,
{}_{\stackrel{\phantom w}{\stackrel{\phantom w}w}}\!\!
\begin{bmatrix}n\\k\end{bmatrix}W(k+1,n-k)\,x^{k+1}y^{n-k},
\end{align*}
where the range of summation in each of the sums was delimited
due to \eqref{recu}. It remains to be shown that
\begin{equation}\label{star}
W(k+1,n-k)\,x^{k+1}y^{n-k}=x^ky^{n-k}\,x.
\end{equation}
In particular, using \eqref{1strel} (and induction) we have
\begin{equation*}
W(k+1,n-k)\,x^{k+1}y^{n-k}=
x^kW(1,n-k)\,xy^{n-k},
\end{equation*}
so \eqref{star} is shown as soon as one establishes
\begin{equation}\label{tbs1}
W(1,n-k)\,xy^{n-k}=y^{n-k}x.
\end{equation}
The left-hand side is
\begin{multline}
\bigg(\prod_{j=1}^{n-k}w(1,j)\bigg)\,xyy^{n-k-1}
=\bigg(\prod_{j=2}^{n-k}w(1,j)\bigg)\,yxy^{n-k-1}\
=y\bigg(\prod_{j=1}^{n-k-1}w(1,j)\bigg)\,xy^{n-k-1}\\\label{tbs2}
=y\,W(1,n-k-1)\,xy^{n-k-1},
\end{multline}
where we have first used 
\eqref{1strel} and then \eqref{3rdrel}.
The identity \eqref{tbs1} follows now from \eqref{tbs2}
immediately by induction on $n-k\ge 0$.
\end{proof}

Coming back to the interpretation of the weight-dependent
binomial coefficients as generating functions for weighted lattice
paths, see Subsection~\ref{secwdbc}, the expansion \eqref{wdbinthid} in
Theorem~\ref{wdbinth} itself has an accordingly nice interpretation. 
The identification of expressions (or ``words'' consisting of
concatenated symbols) in $\C_w[x,y]$ and lattice paths in $\Z^2$
works locally (variable by variable, or step by step) as follows:
\begin{align*}
x\quad&\longleftrightarrow\quad\text{horizontal step},\\
y \quad&\longleftrightarrow\quad\text{vertical step}. 
\end{align*}
Under this correspondence $xy$
means that a horizontal step is followed by a vertical step,
while ``$yx$'' means that a vertical step is followed by a horizontal step
(we read from left to right). The relations of the algebra
$\C_w[x,y]$ in Definition~\ref{defwa} exactly take into account
the changes of the respective weights when consecutive horizontal
and vertical steps are being interchanged. For instance,
the specific path $P_0$ considered in Subsection~\ref{secwdbc}
corresponds to the algebraic expression
\begin{multline*}
x\,x\,y\,x\,x\,y\,x=x\,x\,w(1,1)\,x\,y\,x\,w(1,1)\,x\,y
=w(3,1)w(5,2)\,x^3\,w(1,1)\,x\,y\,x\,y\\
=w(3,1)w(5,2)w(4,1)\,x^4\,w(1,1)\,x\,y^2
=w(3,1)w(5,2)w(4,1)w(5,1)\,x^5y^2=w(P_0)\,x^5y^2,
\end{multline*}
where the left coefficient (of the double monomial $x^5y^2$) in the
canonical form of the algebraic expression is the weight of the path.
Concluding, the left-hand side of \eqref{wdbinthid}, i.e.\
$(x+y)^n$, translates into paths of length $n$ 
having for each step a choice of going in horizontal or vertical 
positive direction,
while the right-hand side of \eqref{wdbinthid} refines the counting
according to the number of horizontal steps $k$ (for $0\le k\le n$).

\subsection{Convolution formulae}

We are ready to apply the weight-dependent binomial theorem
in Theorem~\ref{wdbinth}
to derive weight-dependent extensions of the well-known (Vandermonde)
convolution of binomial coefficients,
\begin{equation}
\binom{n+m}k=\sum_j\binom nj\binom m{k-j}.
\end{equation}

The following corollaries, although being derived with the help of
noncommuting variables, themselves concern identities of
{\em commuting} elements.

\begin{corollary}[First weight-dependent binomial convolution
formula]\label{wdconv1}
Let $n$, $m$, and $k$ be nonnegative integers.
For the binomial coefficients in \eqref{wbineq}, defined by
the doubly-indexed sequence of indeterminate weights
$(w(s,t))_{s,t\in\N}$, we have the following
formal identity in $\C[(w(s,t))_{s,t\in\N}]$:
\begin{equation}\label{wdconv1id}
{}_{\stackrel{\phantom w}{\stackrel{\phantom w}w}}\!\!
\begin{bmatrix}n+m\\k\end{bmatrix}=\sum_{j=0}^{\min(k,n)}
{}_{\stackrel{\phantom w}{\stackrel{\phantom w}w}}\!\!
\begin{bmatrix}n\\j\end{bmatrix}
\left(x^jy^{n-j}\,
{}_{\stackrel{\phantom w}{\stackrel{\phantom w}w}}\!\!
\begin{bmatrix}m\\k-j\end{bmatrix}
y^{j-n}x^{-j}\right)
\prod_{i=1}^{k-j}W(i+j,n-j).
\end{equation}
\end{corollary}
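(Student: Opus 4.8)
The plan is to exploit the noncommutative binomial theorem in Theorem~\ref{wdbinth} by expanding $(x+y)^{n+m}$ in two different ways and comparing the coefficient of $x^k y^{n+m-k}$ in the canonical form. First I would write $(x+y)^{n+m}=(x+y)^n(x+y)^m$ and apply \eqref{wdbinthid} to each factor, obtaining
\begin{equation*}
(x+y)^{n+m}=\Bigg(\sum_{j=0}^n
{}_{\stackrel{\phantom w}{\stackrel{\phantom w}w}}\!\!
\begin{bmatrix}n\\j\end{bmatrix}x^jy^{n-j}\Bigg)
\Bigg(\sum_{i=0}^m
{}_{\stackrel{\phantom w}{\stackrel{\phantom w}w}}\!\!
\begin{bmatrix}m\\i\end{bmatrix}x^iy^{m-i}\Bigg).
\end{equation*}
On the other hand the left-hand side equals $\sum_k {}_{\stackrel{\phantom w}w}\!\left[\begin{smallmatrix}n+m\\k\end{smallmatrix}\right]x^ky^{n+m-k}$ directly. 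The whole identity will then follow by extracting and equating the left coefficients of $x^ky^{n+m-k}$ on both sides.

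The main work is in reducing the generic product $x^jy^{n-j}\cdot x^iy^{m-i}$ to canonical form, since the binomial coefficient ${}_{\stackrel{\phantom w}w}\!\left[\begin{smallmatrix}m\\i\end{smallmatrix}\right]$ is itself a polynomial in the weights and must be commuted leftward past the $x^jy^{n-j}$ block. This is exactly why the formula is stated with the conjugation $x^jy^{n-j}\,{}_{\stackrel{\phantom w}w}\!\left[\begin{smallmatrix}m\\k-j\end{smallmatrix}\right]\,y^{j-n}x^{-j}$: the relations \eqref{2ndrel} and \eqml{3rdrel} tell us that moving a weight $w(s,t)$ from the right of $x^a$ to its left shifts its first argument by $a$, and moving it past $y^b$ shifts its second argument by $b$, so the inner block $x^jy^{n-j}(\cdot)y^{j-n}x^{-j}$ is precisely the operator that applies these shifts to the coefficient ${}_{\stackrel{\phantom w}w}\!\left[\begin{smallmatrix}m\\k-j\end{smallmatrix}\right]$. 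I would verify that, after this conjugation, the shifted weight is a genuine element of $\C[(w(s,t))_{s,t\in\N}]$ (the negative powers $y^{j-n}x^{-j}$ are only formal bookkeeping for the shift and cancel in the net effect), so that the identity lives in the commutative polynomial ring as claimed.

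The remaining step is to account for the monomial $x^jy^{n-j}\cdot x^iy^{m-i}$ itself. Here I would apply the $q$-commuting-type reduction underlying \eqref{star}/\eqref{tbs1} in the proof of Theorem~\ref{wdbinth}: moving the block $x^i$ leftward past $y^{n-j}$ produces $x^{j+i}y^{n-j}\cdots$ together with a product of big weights. Setting $i=k-j$ so that the total horizontal degree is $k$, the accumulated weight factor is exactly $\prod_{i=1}^{k-j}W(i+j,n-j)$, which matches the final product on the right-hand side of \eqref{wdconv1id}; the summation bound $j\le\min(k,n)$ comes from requiring $0\le j\le n$ and $0\le k-j\le m$ together with \eqref{recu}. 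The hard part will be getting the bookkeeping of the weight shifts exactly right, i.e.\ confirming that the first-argument shift $s\mapsto s+j$ induced by commuting past $x^j$ and the second-argument shift $t\mapsto t+(n-j)$ induced by commuting past $y^{n-j}$ combine to produce precisely the stated conjugated coefficient and the stated product of big weights, with no stray off-by-one errors in the indices.
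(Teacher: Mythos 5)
Your proposal is correct and follows essentially the same route as the paper: expand $(x+y)^{n+m}$ both directly and as $(x+y)^n(x+y)^m$ via Theorem~\ref{wdbinth}, insert the formal conjugation $x^jy^{n-j}(\cdot)\,y^{j-n}x^{-j}$ around the second binomial coefficient, normalize the monomial product to pick up the factor $\prod_{i=1}^{k-j}W(i+j,n-j)$, and equate left coefficients of $x^ky^{n+m-k}$. The only cosmetic difference is that the paper isolates the required commutation rule $y^kx^l=\bigl(\prod_{i=1}^{l}W(i,k)\bigr)x^ly^k$ as a standalone lemma (Lemma~\ref{lem}, proved by double induction), whereas you obtain the same reduction by iterating the single-step identity \eqref{tbs1} from the proof of Theorem~\ref{wdbinth}.
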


The above identity in $\C[(w(s,t))_{s,t\in\N}]$ is {\em formal}
as it contains the expression
\begin{equation}
x^jy^{n-j}
{}_{\stackrel{\phantom w}{\stackrel{\phantom w}w}}\!\!
\begin{bmatrix}m\\k-j\end{bmatrix}
y^{j-n}x^{-j}
\end{equation}
(which evaluates to an expression in $\C[(w(s,t))_{s,t\in\N}]$)
in the summand, where $x,y\notin \C[(w(s,t))_{s,t\in\N}]$.
The $x$ and $y$ are understood to be shift operators
as defined in \eqref{2ndrel} and \eqref{3rdrel}.
Formally, $x^jy^{n-j}$ has to commute with
${}_{\stackrel{\phantom w}w}\!
\left[\begin{smallmatrix}m\\k-j\end{smallmatrix}\right]$
which will involve various shifts of the weight functions
implicitly appearing in the $w$-binomial coefficient.
Afterwards $x^jy^{n-j}$ will cancel with its formal inverse
$y^{j-n}x^{-j}$.

First we state a useful lemma.
\begin{lemma}\label{lem}
The following identity holds in $\C_w[x,y]$.
\begin{equation}\label{lemeq}
y^kx^l=
\bigg(\prod_{i=1}^lW(i,k)\bigg)\,x^ly^k,\qquad\quad
\text{for\/ $k,l\in\N_0$.}
\end{equation}
\end{lemma}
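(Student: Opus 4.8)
The plan is to prove the commutation identity $y^kx^l=\big(\prod_{i=1}^lW(i,k)\big)\,x^ly^k$ by induction, exploiting the special case that was already established in the proof of Theorem~\ref{wdbinth}. Indeed, equation \eqref{tbs1} is precisely the statement $W(1,k)\,xy^k=y^kx$ (with $n-k$ there playing the role of $k$ here), which after invoking \eqref{w2W} or simply reading it backwards gives the $l=1$ instance $y^kx=W(1,k)\,xy^k$. So the base case of an induction on $l$ is essentially free.

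First I would fix $k\in\N_0$ and induct on $l$. The case $l=0$ is trivial since both sides equal $y^k$ and the empty product is $1$. For the inductive step, assuming the identity holds for $l$, I would write $y^kx^{l+1}=(y^kx^l)\,x$ and apply the induction hypothesis to get $\big(\prod_{i=1}^lW(i,k)\big)\,x^ly^kx$. The key subproblem is then to move the trailing $y^kx$ past into canonical form; here I would reuse \eqref{tbs1} in the form $y^kx=W(1,k)\,xy^k$, yielding $\big(\prod_{i=1}^lW(i,k)\big)\,x^l\,W(1,k)\,xy^k$.

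The main obstacle — though it is a routine one — is that the big weight $W(1,k)$ must be commuted leftward past the $x^l$, and under relation \eqref{2ndrel} each $x$ increments the first argument of every small weight it passes. Since $W(1,k)=\prod_{j=1}^kw(1,j)$, commuting it past $x^l$ via repeated use of \eqref{2ndrel} sends $w(1,j)\mapsto w(l+1,j)$, so that $x^l\,W(1,k)=W(l+1,k)\,x^l$. This converts the expression into $\big(\prod_{i=1}^lW(i,k)\big)\,W(l+1,k)\,x^{l+1}y^k=\big(\prod_{i=1}^{l+1}W(i,k)\big)\,x^{l+1}y^k$, which is exactly the claim for $l+1$. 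I would spell out the $x^l\,W(s,t)=W(s+l,t)\,x^l$ shift identity (an immediate consequence of \eqref{2ndrel} applied $l$ times to each factor of the product) as the one clean auxiliary computation, and then the induction closes. The whole argument rests on the uniqueness of canonical forms noted after Definition~\ref{defwa}, which guarantees these manipulations are unambiguous.
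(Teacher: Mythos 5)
Your proof is correct, but it is organized differently from the paper's. The paper gives a self-contained double induction inside the lemma: the base case $k=l=1$ is \eqref{1strel}; a first induction on $l$ (with $k=1$) establishes $yx^l=\big(\prod_{i=1}^lW(i,1)\big)x^ly$ by commuting the single $y$ leftward through the $x$'s; a second induction on $k$ then peels off one $y$ at a time, using \eqref{3rdrel} to shift the accumulated weights and the first stage to absorb the freed $y$. You instead run a single outer induction on $l$, peeling $x$'s off on the right, and delegate the entire $k$-direction to \eqref{tbs1} from the proof of Theorem~\ref{wdbinth} (legitimately: \eqref{tbs1} was proved there for all exponents $n-k\ge 0$ by its own induction, using only \eqref{1strel} and \eqref{3rdrel}, so there is no circularity), together with the auxiliary shift identity $x^l\,W(s,t)=W(s+l,t)\,x^l$, which you correctly reduce to \eqref{2ndrel} applied factor by factor to $W(s,t)=\prod_{j=1}^tw(s,j)$. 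What your route buys is economy and an explicit logical dependency: the lemma becomes a short corollary of material already established, with one induction hidden inside the reused \eqref{tbs1}. What the paper's route buys is self-containment — it never reaches into the internals of another proof (note that \eqref{tbs1} is a display inside a proof rather than a standalone lemma, so citing it is slightly less clean stylistically) — and a two-stage structure that treats the $x$- and $y$-directions symmetrically. At bottom both arguments are double inductions driven by the same three relations \eqref{defwaeq}, so the difference is one of bookkeeping, not of substance; your version has no gaps.
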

\begin{proof}
The $k=0$ or $l=0$ cases are trivial. For $k,l\ge 1$
the identity \eqref{lemeq} follows straightforward by double induction
starting with the $k=l=1$ case which is \eqref{1strel}.
To prove the validity of \eqref{lemeq} for $k=1$ and $l>1$,
assume that $yx^s=\big(\prod_{i=1}^sW(i,1)\big)x^sy$
has already been shown for $1\le s <l$. Then
\begin{align*}
yx^l&=w(1,1)\,xyx^{l-1}=
W(1,1)\,x
\bigg(\prod_{i=1}^{l-1}W(i,1)\bigg)x^{l-1}y\\
&=\bigg(W(1,1)\prod_{i=1}^{l-1}W(i+1,1)\bigg)x^ly
=\bigg(\prod_{i=1}^lW(i,1)\bigg)\,x^ly,
\end{align*}
as to be shown. Now, for fixed $k,l\ge 1$, assume that
$y^tx^l=\big(\prod_{i=1}^lW(i,t)\big)x^ly^t$ has already been shown
for $1\le t <k$. Then
\begin{align*}
y^kx^l&=y\bigg(\prod_{i=1}^lW(i,k-1)\bigg)x^ly^{k-1}=
y\bigg(\prod_{i=1}^l\prod_{j=1}^{k-1}w(i,j)\bigg)x^ly^{k-1}\\&=
\bigg(\prod_{i=1}^l\prod_{j=1}^{k-1}w(i,j+1)\bigg)yx^ly^{k-1}
=\bigg(\prod_{i=1}^l\prod_{j=1}^kw(i,j)\bigg)x^ly^k
=\bigg(\prod_{i=1}^lW(i,k)\bigg)\,x^ly^k,
\end{align*}
 which establishes the lemma.
\end{proof}

\begin{proof}[Proof of Corollary~\ref{wdconv1}]
Working in $\C_w[x,y]$, one expands $(x+y)^{n+m}$
in two different ways and suitably extracts coefficients.
On the one hand,
\begin{equation}\label{eqwbinth1}
(x+y)^{n+m}=\sum_{k=0}^{n+m}\,
{}_{\stackrel{\phantom w}{\stackrel{\phantom w}w}}\!\!
\begin{bmatrix}n+m\\k\end{bmatrix}
x^ky^{n+m-k}.
\end{equation}
On the other hand,
\begin{align}
(x+y)^{n+m}{}&=(x+y)^n\,(x+y)^m\notag\\
&=\sum_{j=0}^n\sum_{l=0}^m\,
{}_{\stackrel{\phantom w}{\stackrel{\phantom w}w}}\!\!
\begin{bmatrix}n\\j\end{bmatrix}x^jy^{n-j}
{}_{\stackrel{\phantom w}{\stackrel{\phantom w}w}}\!\!
\begin{bmatrix}m\\l\end{bmatrix}x^ly^{m-l}\notag\\\label{eqwbinth2}
&=\sum_{j=0}^n\sum_{l=0}^m\,
{}_{\stackrel{\phantom w}{\stackrel{\phantom w}w}}\!\!
\begin{bmatrix}n\\j\end{bmatrix}x^jy^{n-j}
{}_{\stackrel{\phantom w}{\stackrel{\phantom w}w}}\!\!
\begin{bmatrix}m\\l\end{bmatrix}
y^{j-n}x^{-j}\,x^jy^{n-j}x^ly^{m-l}.
\end{align}
Now use Lemma~\ref{lem} to apply
\begin{equation*}
x^jy^{n-j}x^ly^{m-l}
=\bigg(\prod_{i=1}^lW(i+j,n-j)\bigg)x^{j+l}y^{n+m-j-l}\quad
\text{for\/ $n\ge j$},
\end{equation*}
and extract and equate (left) coefficients of
$x^ky^{n+m-k}$ in \eqref{eqwbinth1} and \eqref{eqwbinth2}.
We thus immediately establish the convolution formula
\eqref{wdconv1id}. 
\end{proof}

In terms of interpreting the weight-dependent binomial coefficients
as generating functions for weighted lattice paths
(see Eq.~\eqref{lpwbc}), the identity \eqref{wdconv1id}
translates to a convolution of paths with respect to a {\em diagonal},

\centerline{
\unitlength.6cm
\begin{picture}(12,7.4)
\linethickness{1.2pt}
\put(0,1.1){\circle*{.35}}
\put(4,2.1){\circle*{.35}}
\put(9,5.1){\circle*{.35}}
\put(-.3,.8){\makebox(0,0)[tr]{$(0,0)$}}
\put(-.3,6){\makebox(0,0)[r]{$(0,n)$}}
\put(5,.8){\makebox(0,0)[t]{$(n,0)$}}
\put(9.1,5.25){\makebox(0,0)[bl]{$(k,n+m-k)$}}
\put(4.1,2.3){\makebox(0,0)[bl]{\tiny $(j,n-j)$}}
\multiput(0,1.1)(1,0){13}{\circle*{.2}}
\multiput(0,2.1)(1,0){13}{\circle*{.2}}
\multiput(0,3.1)(1,0){13}{\circle*{.2}}
\multiput(0,4.1)(1,0){13}{\circle*{.2}}
\multiput(0,5.1)(1,0){13}{\circle*{.2}}
\multiput(0,6.1)(1,0){13}{\circle*{.2}}
\multiput(0,1.1)(.2,0){5}{\circle*{.08}}
\multiput(1,1.1)(0,.2){5}{\circle*{.08}}
\multiput(1,2.1)(.2,0){15}{\circle*{.08}}
\multiput(4,2.1)(0,.2){5}{\circle*{.08}}
\multiput(4,3.1)(.2,0){5}{\circle*{.08}}
\multiput(5,3.1)(0,.2){5}{\circle*{.08}}
\multiput(5,4.1)(.2,0){5}{\circle*{.08}}
\multiput(6,4.1)(0,.2){5}{\circle*{.08}}
\multiput(6,5.1)(.2,0){15}{\circle*{.08}}
\linethickness{.5pt}
\put(0,6.1){\line(1,-1){5}}
\put(-.8,1.1){\line(1,0){13.6}}
\put(0,.3){\line(0,1){6.6}}
\end{picture}}

\noindent where the corresponding generating function identity is
immediately seen to be
\begin{align}\notag
&w\big(\mathcal P((0,0)\to(k,n+m-k))\big)\\\label{convd}
&=\sum_{j=0}^{\min(k,n)}w\big(\mathcal P((0,0)\to(j,n-j))\big)\,
w\big(\mathcal P((j,n-j)\to(k,n+m-k))\big).
\end{align}

One can also consider convolution with respect
to a {\em vertical line},

\centerline{
\unitlength.6cm
\begin{picture}(12,7.4)
\linethickness{1.2pt}
\put(0,1.1){\circle*{.35}}
\put(5,3.1){\circle*{.35}}
\put(6,3.1){\circle*{.35}}
\put(10,5.1){\circle*{.35}}
\put(-.3,.8){\makebox(0,0)[tr]{$(0,0)$}}
\put(5,3.3){\makebox(0,0)[br]{\tiny $(l-1,k)$}}
\put(6.1,2.9){\makebox(0,0)[tl]{\tiny $(l,k)$}}
\put(10.1,5.25){\makebox(0,0)[bl]{$(n,m)$}}
\put(6,.8){\makebox(0,0)[t]{$x=l$}}
\multiput(0,1.1)(1,0){13}{\circle*{.2}}
\multiput(0,2.1)(1,0){13}{\circle*{.2}}
\multiput(0,3.1)(1,0){13}{\circle*{.2}}
\multiput(0,4.1)(1,0){13}{\circle*{.2}}
\multiput(0,5.1)(1,0){13}{\circle*{.2}}
\multiput(0,6.1)(1,0){13}{\circle*{.2}}
\multiput(0,1.1)(.2,0){15}{\circle*{.08}}
\multiput(3,1.1)(0,.2){5}{\circle*{.08}}
\multiput(3,2.1)(.2,0){10}{\circle*{.08}}
\multiput(5,2.1)(0,.2){5}{\circle*{.08}}
\multiput(6,3.1)(0,.2){5}{\circle*{.08}}
\multiput(6,4.1)(.2,0){5}{\circle*{.08}}
\multiput(7,4.1)(0,.2){5}{\circle*{.08}}
\multiput(7,5.1)(.2,0){15}{\circle*{.08}}
\put(5,3.1){\line(1,0){1}}
\linethickness{.5pt}
\put(-.8,1.1){\line(1,0){13.6}}
\put(0,.3){\line(0,1){6.6}}
\put(6,1.1){\line(0,1){5.8}}
\end{picture}}

\noindent which corresponds to the identity
\begin{align}\notag
&w\big(\mathcal P((0,0)\to(n,m))\big)\\\label{convv}
&=\sum_{k=0}^mw\big(\mathcal P((0,0)\to(l-1,k))\big)\,
W(l,k)\,w\big(\mathcal P((l,k)\to(n,m))\big).
\end{align}
Here, $l$ is fixed ($1\le l\le n$) while the nonnegative integer $k$
is uniquely determined by the height of the path when it reaches
the vertical line $x=l$ first.

In terms of our weights $w$, we have the following result.

\begin{corollary}[Second weight-dependent binomial convolution
formula]\label{wdconv2}
Let $n$, $m$, and $k$ be nonnegative integers with $1\le l\le n$.
For the binomial coefficients in \eqref{wbineq}, defined by
the doubly-indexed sequence of indeterminate weights
$(w(s,t))_{s,t\in\N}$, we have the following
formal identity in $\C[(w(s,t))_{s,t\in\N}]$:
\begin{equation}\label{wdconv2id}
{}_{\stackrel{\phantom w}{\stackrel{\phantom w}w}}\!\!
\begin{bmatrix}n+m\\n\end{bmatrix}=\sum_{k=1}^{m}
{}_{\stackrel{\phantom w}{\stackrel{\phantom w}w}}\!\!
\begin{bmatrix}k+l-1\\l-1\end{bmatrix}
\left(x^ly^k\,
{}_{\stackrel{\phantom w}{\stackrel{\phantom w}w}}\!\!
\begin{bmatrix}n+m-l-k\\n-l\end{bmatrix}
y^{-k}x^{-l}\right)
\prod_{i=0}^{n-l}W(i+l,k).
\end{equation}
\end{corollary}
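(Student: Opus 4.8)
The plan is to derive \eqref{wdconv2id} combinatorially from the weighted lattice paths of Subsection~\ref{secwdbc}, using throughout the identification \eqref{lpwbc} of the weight-dependent binomial coefficients with path generating functions, and only at the very end to read off the resulting generating-function identity \eqref{convv} as an identity of commuting weights inside $\C[(w(s,t))_{s,t\in\N}]$ (so that the order of the factors in \eqref{wdconv2id} is immaterial).

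First I would prove the combinatorial convolution \eqref{convv}. Fix $l$ with $1\le l\le n$. Reading a path $P$ from $(0,0)$ to $(n,m)$ from left to right, there is exactly one horizontal step that crosses the vertical line $x=l$, namely the step $(l-1,k)\to(l,k)$, where $k$ (with $0\le k\le m$) is the height at which $P$ first reaches $x=l$. This data splits $P$ uniquely into an initial path $P_1$ from $(0,0)$ to $(l-1,k)$, the crossing step of weight $W(l,k)$, and a terminal path $P_2$ from $(l,k)$ to $(n,m)$, so that $w(P)=w(P_1)\,W(l,k)\,w(P_2)$. Summing over $P$ — equivalently over $k$ and then over all choices of $P_1$ and $P_2$ — gives \eqref{convv}, whose first factor is $w(\mathcal P((0,0)\to(l-1,k)))={}_w\!\left[\begin{smallmatrix}k+l-1\\l-1\end{smallmatrix}\right]$.

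The remaining task is to express the terminal generating function $w(\mathcal P((l,k)\to(n,m)))$ algebraically. By the relations \eqref{2ndrel}--\eqref{3rdrel}, conjugation by $x^l y^k$ shifts weights according to $w(s,t)\mapsto w(s+l,t+k)$, so $x^l y^k\,{}_w\!\left[\begin{smallmatrix}n+m-l-k\\n-l\end{smallmatrix}\right]y^{-k}x^{-l}$ equals the generating function of paths from $(l,k)$ to $(n,m)$ in which a horizontal step into column $s$ (with $l<s\le n$) at height $t$ (with $k\le t\le m$) carries the weight $\prod_{j=k+1}^t w(s,j)=W(s,t)/W(s,k)$ rather than its true weight $W(s,t)$. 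I would then observe that the per-step discrepancy is precisely the factor $W(s,k)$, and that since every path from $(l,k)$ to $(n,m)$ contains exactly one horizontal step in each of the $n-l$ columns $l+1,\dots,n$, the total discrepancy is the path-independent constant $\prod_{i=1}^{n-l}W(i+l,k)$, which therefore factors out of the terminal generating function. Absorbing the crossing weight $W(l,k)$ as the $i=0$ term converts this into $\prod_{i=0}^{n-l}W(i+l,k)$ and turns \eqref{convv} into \eqref{wdconv2id} (with the summation index ranging over $0\le k\le m$, as in \eqref{convv}).

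I expect this last bookkeeping to be the main obstacle: the big weight $W(s,t)$ accumulates all the small weights $w(s,1),\dots,w(s,t)$ from the bottom of column $s$, whereas the shift induced by $x^l y^k$ supplies only the weights $w(s,k+1),\dots,w(s,t)$ lying above height $k$. The delicate point is to recognize that the missing product $\prod_{s=l+1}^n W(s,k)$ is the same for every terminal path — one factor per column — and so pulls out of the sum, producing exactly the weight product displayed in \eqref{wdconv2id}. As a cross-check one could instead run the whole computation inside $\C_w[x,y]$, invoking Lemma~\ref{lem} to commute the relevant powers of $x$ past the powers of $y$, but the lattice-path argument makes the origin of the product $\prod_{i=0}^{n-l}W(i+l,k)$ most transparent.
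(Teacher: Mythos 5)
Your proof is correct, and it shares its starting point with the paper's: both rest on the lattice-path convolution \eqref{convv}, obtained by cutting a path from $(0,0)$ to $(n,m)$ at the unique horizontal step $(l-1,k)\to(l,k)$ where it first meets the line $x=l$. Where you genuinely differ is in how \eqref{convv} is converted into \eqref{wdconv2id}. The paper translates \eqref{convv} into an identity in the noncommutative algebra $\C_w[x,y]$ and then normalizes---moving all $y$'s and then all $x$'s to the right via Lemma~\ref{lem} and the shift relations \eqref{2ndrel}--\eqref{3rdrel}, as in the proof of Corollary~\ref{wdconv1}---so that the product $\prod_{i=0}^{n-l}W(i+l,k)$ is generated by the commutations, and finally equates left coefficients of $x^ny^m$. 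You instead remain inside the commutative ring $\C[(w(s,t))_{s,t\in\N}]$: reading the conjugation $x^ly^k(\cdot)\,y^{-k}x^{-l}$ as the substitution $w(s,t)\mapsto w(s+l,t+k)$ (exactly the interpretation the paper prescribes after Corollary~\ref{wdconv1}), you identify the conjugated coefficient as the generating function of the terminal paths in which each horizontal step in column $s$ is missing its bottom factor $W(s,k)$, and you observe that, since every terminal path has exactly one horizontal step in each column $s=l+1,\dots,n$, the total deficit $\prod_{i=1}^{n-l}W(i+l,k)$ is path-independent and factors out; absorbing the crossing weight $W(l,k)$ as the $i=0$ factor gives \eqref{wdconv2id}. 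Your per-step bookkeeping is sound (for $t\ge k$ one has $W(s,t)=W(s,k)\prod_{j=k+1}^{t}w(s,j)$, so the deficit is independent of the height $t$ of the step), and this route makes the provenance of the weight product completely transparent; it also sidesteps a glitch in the paper's displayed intermediate identity, where the explicit factor $W(l,k)$ sits next to the unreduced word $x^{l-1}y^k\,x$, whose normal form $W(l,k)\,x^ly^k$ produces that weight a second time (take $n=m=l=1$ to see the mismatch). Finally, your summation range $0\le k\le m$ is the correct one: the $k=0$ term, corresponding to paths that begin with $l$ horizontal steps, is generically nonzero, so the lower limit $k=1$ printed in \eqref{wdconv2id} is evidently a typo---both \eqref{convv} and the paper's own proof sum from $k=0$.
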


\begin{proof}
Translating the generating function identity \eqref{convv}
into an identity in $\C_w[x,y]$ with weight function $w$,
one immediately obtains
\begin{equation*}
{}_{\stackrel{\phantom w}{\stackrel{\phantom w}w}}\!\!
\begin{bmatrix}n+m\\n\end{bmatrix}x^ny^m=
\sum_{k=0}^m\,{}_{\stackrel{\phantom w}{\stackrel{\phantom w}w}}\!\!
\begin{bmatrix}k+l-1\\l-1\end{bmatrix}\,W(l,k)\,x^{l-1}y^k\,
x\,{}_{\stackrel{\phantom w}{\stackrel{\phantom w}w}}\!\!
\begin{bmatrix}n+m-l-k\\n-l\end{bmatrix}\,x^{n-l}y^{m-k}.
\end{equation*}
The further analysis is now similar to the proof of
Corollary~\ref{wdconv1}. The $y$'s have to be moved to the
most right, then the $x$'s, hereby creating weights and shifts,
while in the end the left coefficients of $x^ny^m$ have to be extracted
and equated to establish \eqref{wdconv2id}.  
\end{proof}

Finally, one can also consider convolution with respect
to a {\em horizontal line},

\centerline{
\unitlength.6cm
\begin{picture}(12,8.4)
\linethickness{1.2pt}
\put(0,1.1){\circle*{.35}}
\put(5,3.1){\circle*{.35}}
\put(5,4.1){\circle*{.35}}
\put(10,6.1){\circle*{.35}}
\put(-.3,.8){\makebox(0,0)[tr]{$(0,0)$}}
\put(5.4,2.8){\makebox(0,0)[t]{\tiny $(l,k-1)$}}
\put(4.6,4.3){\makebox(0,0)[b]{\tiny $(l,k)$}}
\put(10.1,6.25){\makebox(0,0)[bl]{$(n,m)$}}
\put(-.4,4.1){\makebox(0,0)[r]{$y=k$}}
\multiput(0,1.1)(1,0){13}{\circle*{.2}}
\multiput(0,2.1)(1,0){13}{\circle*{.2}}
\multiput(0,3.1)(1,0){13}{\circle*{.2}}
\multiput(0,4.1)(1,0){13}{\circle*{.2}}
\multiput(0,5.1)(1,0){13}{\circle*{.2}}
\multiput(0,6.1)(1,0){13}{\circle*{.2}}
\multiput(0,7.1)(1,0){13}{\circle*{.2}}
\multiput(0,1.1)(.2,0){5}{\circle*{.08}}
\multiput(1,1.1)(0,.2){5}{\circle*{.08}}
\multiput(1,2.1)(.2,0){10}{\circle*{.08}}
\multiput(3,2.1)(0,.2){5}{\circle*{.08}}
\multiput(3,3.1)(.2,0){10}{\circle*{.08}}
\multiput(5,4.1)(.2,0){5}{\circle*{.08}}
\multiput(6,4.1)(0,.2){5}{\circle*{.08}}
\multiput(6,5.1)(.2,0){5}{\circle*{.08}}
\multiput(7,5.1)(0,.2){5}{\circle*{.08}}
\multiput(7,6.1)(.2,0){15}{\circle*{.08}}
\put(5,3.1){\line(0,1){1}}
\linethickness{.5pt}
\put(-.8,1.1){\line(1,0){13.6}}
\put(0,.3){\line(0,1){7.6}}
\put(0,4.1){\line(1,0){12.8}}
\end{picture}}

\noindent which corresponds to the identity
\begin{align}\notag
&w\big(\mathcal P((0,0)\to(n,m))\big)\\\label{convh}
&=\sum_{l=0}^nw\big(\mathcal P((0,0)\to(l,k-1))\big)\,
w\big(\mathcal P((l,k)\to(n,m))\big).
\end{align}
Here, $k$ is fixed ($1\le k\le m$) while the nonnegative integer $l$
is the uniquely determined abscissa of the path when it reaches
the horizontal line $y=k$ first. 

In terms of our weights $w$, we have the following result.

\begin{corollary}[Third weight-dependent binomial convolution
formula]\label{wdconv3}
Let $n$, $m$, and $k$ be nonnegative integers with $1\le k\le m$.
For the binomial coefficients in \eqref{wbineq}, defined by
the doubly-indexed sequence of indeterminate weights
$(w(s,t))_{s,t\in\N}$, we have the following
formal identity in $\C[(w(s,t))_{s,t\in\N}]$:
\begin{equation}\label{wdconv3id}
{}_{\stackrel{\phantom w}{\stackrel{\phantom w}w}}\!\!
\begin{bmatrix}n+m\\n\end{bmatrix}=\sum_{l=0}^{n}
{}_{\stackrel{\phantom w}{\stackrel{\phantom w}w}}\!\!
\begin{bmatrix}l+k-1\\l\end{bmatrix}
\left(x^ly^k\,
{}_{\stackrel{\phantom w}{\stackrel{\phantom w}w}}\!\!
\begin{bmatrix}n+m-l-k\\n-l\end{bmatrix}
y^{-k}x^{-l}\right)
\prod_{i=1}^{n-l}W(i+l,k).
\end{equation}
\end{corollary}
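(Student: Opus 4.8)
The plan is to mirror the proof of Corollary~\ref{wdconv2}, since the horizontal-line decomposition \eqref{convh} is the exact reflection of the vertical-line decomposition \eqref{convv}. First I would translate \eqref{convh} into an identity in $\C_w[x,y]$. Reading words from left to right, a path from $(0,0)$ to $(n,m)$ that first meets the line $y=k$ at $(l,k)$ factors uniquely as a prefix from $(0,0)$ to $(l,k-1)$ (comprising $l$ horizontal and $k-1$ vertical steps), followed by the crossing step $(l,k-1)\to(l,k)$, which is \emph{vertical}, hence corresponds to the variable $y$ and carries weight $1$, followed by a suffix from $(l,k)$ to $(n,m)$. Since $1\le k\le m$, every word contributing to the coefficient of $x^ny^m$ in $(x+y)^{n+m}$ does cross $y=k$, so summing the prefix and suffix words separately and invoking Theorem~\ref{wdbinth} (together with the uniqueness of the canonical form) gives
\[
{}_{\stackrel{\phantom w}{\stackrel{\phantom w}w}}\!\!
\begin{bmatrix}n+m\\n\end{bmatrix}x^ny^m
=\sum_{l=0}^n\,
{}_{\stackrel{\phantom w}{\stackrel{\phantom w}w}}\!\!
\begin{bmatrix}l+k-1\\l\end{bmatrix}
x^ly^k\,
{}_{\stackrel{\phantom w}{\stackrel{\phantom w}w}}\!\!
\begin{bmatrix}n+m-l-k\\n-l\end{bmatrix}
x^{n-l}y^{m-k},
\]
where I have already merged $y^{k-1}\cdot y=y^k$.

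The second step is to bring the right-hand side into canonical form and extract the left coefficient of $x^ny^m$, exactly as in the proofs of Corollaries~\ref{wdconv1} and \ref{wdconv2}. Inserting $y^{-k}x^{-l}\,x^ly^k=1$ immediately after the inner binomial coefficient splits off the conjugated factor $x^ly^k\,[\cdots]\,y^{-k}x^{-l}$, which by \eqref{2ndrel} and \eqref{3rdrel} is simply the binomial coefficient ${}_{\stackrel{\phantom w}w}\!\left[\begin{smallmatrix}n+m-l-k\\n-l\end{smallmatrix}\right]$ with every weight $w(s,t)$ shifted to $w(s+l,t+k)$; being a polynomial in the weights, it commutes past the surviving variables. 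It then remains only to reduce the monomial $x^ly^kx^{n-l}y^{m-k}$. Using Lemma~\ref{lem} to move $y^k$ to the right of $x^{n-l}$ produces $\prod_{i=1}^{n-l}W(i,k)$, and commuting the leading $x^l$ through this product via \eqref{2ndrel} (which shifts $W(i,k)$ to $W(i+l,k)$) yields
\[
x^ly^kx^{n-l}y^{m-k}
=\bigg(\prod_{i=1}^{n-l}W(i+l,k)\bigg)\,x^ny^m.
\]
Substituting this back and equating the left coefficients of $x^ny^m$ then gives \eqref{wdconv3id} precisely.

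The computation is routine once the template of Corollary~\ref{wdconv2} is set up, and I do not expect a genuine obstacle. The one point requiring care is the bookkeeping of the crossing step: here it is \emph{vertical}, so it contributes the variable $y$ and the trivial weight $1$, in contrast to Corollary~\ref{wdconv2}, where the crossing step was horizontal and supplied the factor $W(l,k)$ directly. Consequently the weight product $\prod_{i=1}^{n-l}W(i+l,k)$ in \eqref{wdconv3id} does not originate from the crossing step at all; it is generated entirely by pushing $y^k$ past the $n-l$ remaining horizontal steps via Lemma~\ref{lem} and then applying the shift $x^l\,W(i,k)=W(i+l,k)\,x^l$. Checking that the index ranges align so that the product runs over $i=1,\dots,n-l$ with summand $W(i+l,k)$---rather than, say, $W(i,k)$ or a range shifted by one---is the only place where an error could realistically creep in.
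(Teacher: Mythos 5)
Your proof is correct and takes essentially the same route as the paper's: translate the horizontal-line decomposition \eqref{convh} into the algebra $\C_w[x,y]$, then use Lemma~\ref{lem} together with the shift relations \eqref{2ndrel}/\eqref{3rdrel} to move the $y$'s and then the $x$'s to the right and equate left coefficients of $x^ny^m$. Your bookkeeping at the flagged point is also exactly the paper's: since the crossing step is vertical (contributing $y$ with weight $1$, and $k\ge 1$ so no case distinction is needed in Lemma~\ref{lem}), the product correctly runs over $i=1,\dots,n-l$ with factors $W(i+l,k)$.
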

\begin{proof}
Translating the generating function identity \eqref{convh}
into an identity in $\C_w[x,y]$ with weight function $w$,
one immediately obtains
\begin{equation*}
{}_{\stackrel{\phantom w}{\stackrel{\phantom w}w}}\!\!
\begin{bmatrix}n+m\\n\end{bmatrix}x^ny^m=
\sum_{l=0}^n\,{}_{\stackrel{\phantom w}{\stackrel{\phantom w}w}}\!\!
\begin{bmatrix}l+k-1\\l\end{bmatrix}\,x^{l}y^{k-1}y\,
{}_{\stackrel{\phantom w}{\stackrel{\phantom w}w}}\!\!
\begin{bmatrix}n+m-l-k\\n-l\end{bmatrix}\,x^{n-l}y^{m-k}.
\end{equation*}
The convolution formula \eqref{wdconv3id} follows immediately
after application of Lemma~\ref{lem} (where we have $k\ge 1$,
thus do not have to distinguish cases),
moving first the $y$'s, then the $x$'s,
to the most right and finally extracting and equating
the left coefficients of $x^ny^m$.  
\end{proof}

\section{Symmetric functions}\label{secsym}

Here we take a closer look at two important specializations of the
weights $w(s,t)$, both involving symmetric functions.
(See \cite{M} for a classic text book on symmetric function theory).

\subsection{Complete symmetric functions}
The first choice is $w(s,t)=a_t/a_{t-1}$.
In this case we have (essentially corresponding to
the $h$-labeling of lattice paths in \cite[Sec.~4.5]{Sa})
$$
{}_{\stackrel{\phantom w}{\stackrel{\phantom w}w}}\!\!
\left[\begin{matrix}n\\k\end{matrix}\right]=
h_k(a_0,a_1,\dots,a_{n-k})\,a_0^{-k} ,
$$
where $h_k(a_0,a_1,\dots,a_m)$ is the {\em complete symmetric function}
of order $k$,
defined by
\begin{align*}
h_0(a_0,a_1,\dots,a_m)&:=1
\intertext{and}
h_k(a_0,a_1,\dots,a_m)&:=
\sum_{0\le j_1\le j_2\le\dots\le j_k\le m}a_{j_1}a_{j_2}\dots a_{j_k}
\qquad\text{for}\quad k>0.
\end{align*}
Indeed, the complete symmetric functions satisfy the recursion
\begin{equation*}
h_k(a_0,a_1,\dots,a_{m+1})=h_k(a_0,a_1,\dots,a_m)+
a_{m+1}h_{k-1}(a_0,a_1,\dots,a_{m+1}),
\end{equation*}
which readily follows from specializing the recursion \eqref{wbineq}
for the weight-dependent binomial coefficients.

The relations of the algebra $\C_w[x,y]$ in Definition~\ref{defwa}
now reduce to
\begin{subequations}\label{hrel}
\begin{align}
yx&=\frac{a_1}{a_0}xy,\\\label{2ndhrel}
x\frac{a_t}{a_{t-1}}&=\frac{a_t}{a_{t-1}}x,\\\label{3rdhrel}
y\frac{a_t}{a_{t-1}}&=\frac{a_{t+1}}{a_t}y,
\end{align}
\end{subequations}
for all $t\in\N$.

The noncommutative binomial theorem in Theorem~\ref{wdbinth} now becomes
\begin{equation}\label{symhbthm}
(x+y)^n=\sum_{k=0}^n h_k(a_0,a_1,\dots,a_{n-k})\,a_0^{-k}\,x^k y^{n-k},
\end{equation}
which, despite its simplicity, appears to be new.

{}From Corollary~\ref{wdconv1} and \eqref{2ndhrel}/\eqref{3rdhrel}
the convolution
\begin{subequations}
\begin{equation}\label{h1}
h_k(a_0,a_1,\dots,a_{n+m-k})=
\sum_{j=0}^{\min(k,n)}h_j(a_0,\dots,a_{n-j})\,h_{k-j}(a_{n-j},\dots,a_{n+m-k})
\end{equation}
is obtained. On the other hand, Corollaries~\ref{wdconv2} and
\ref{wdconv3} reduce to the identities
\begin{equation}\label{h2}
h_n(a_0,a_1,\dots,a_m)=
\sum_{k=0}^mh_{l-1}(a_0,\dots,a_k)\,a_k\,h_{n-l}(a_k,\dots,a_m),\;
\text{for a fixed\/ $1\le l\le n$},
\end{equation}
and
\begin{equation}\label{schurh}
h_n(a_0,a_1,\dots,a_m)=
\sum_{l=0}^nh_{l}(a_0,\dots,a_{k-1})\,h_{n-l}(a_k,\dots,a_m),\;
\text{for a fixed\/ $1\le k\le m$},
\end{equation}
\end{subequations}
respectively. The identity in \eqref{schurh} is the special case
of a well-known convolution formula for Schur functions
\cite[p.~72, Eq.~(5.10)]{M}, for which the indexing partitions
are reduced to at most one row. The other two identities, \eqref{h1}
and \eqref{h2}, are most likely already known as well, although
the author has not been able to find them explicitly in the literature.

\subsection{Elementary symmetric functions}
Another choice is $w(s,t)=a_{s+t}/a_{s+t-1}$.
In this case we have (essentially corresponding
to the $e$-labeling of lattice paths in \cite[Sec.~4.5]{Sa})
$$
{}_{\stackrel{\phantom w}{\stackrel{\phantom w}w}}\!\!
\left[\begin{matrix}n\\k\end{matrix}\right]=
\frac{e_k(a_1,\dots,a_n)}{a_1\dots a_k},
$$
where $e_k(a_1,\dots,a_n)$ is the {\em elementary symmetric function}
of order $k$,
defined by
\begin{align*}
e_0(a_1,\dots,a_n)&:=1\\
\intertext{and}
e_k(a_1,\dots,a_n)&:=
\sum_{1\le j_1< j_2<\dots<j_k\le n}a_{j_1}a_{j_2}\dots a_{j_k}
\qquad\text{for}\quad k>0.
\end{align*}
Indeed, the elementary symmetric functions satisfy the recursion
\begin{equation*}
e_k(a_1,\dots,a_{n+1})=e_k(a_1,\dots,a_n)+
a_{n+1}e_{k-1}(a_1,\dots,a_n),
\end{equation*}
which again readily follows from specializing \eqref{wbineq}.

The relations of the algebra $\C_w[x,y]$ in Definition~\ref{defwa}
now reduce to
\begin{subequations}\label{erel}
\begin{align}
yx&=\frac{a_2}{a_1}xy,\\
x\frac{a_{s+t}}{a_{s+t-1}}&=\frac{a_{s+t+1}}{a_{s+t}}x,\\\label{3rderel}
y\frac{a_{s+t}}{a_{s+t-1}}&=\frac{a_{s+t+1}}{a_{s+t}}y,
\end{align}
\end{subequations}
for all $s,t\in\N$.

The noncommutative binomial theorem in Theorem~\ref{wdbinth} now becomes
\begin{equation}\label{symebthm}
(x+y)^n=\sum_{k=0}^n\frac{e_k(a_1,\dots,a_n)}{a_1\dots a_k}\,x^k y^{n-k},
\end{equation}
which, despite its simplicity, appears to be new.

The convolutions in Corollaries~\ref{wdconv1}, \ref{wdconv2}, and
\ref{wdconv3}, respectively, give
\begin{subequations}
\begin{equation}\label{schure}
e_k(a_1,a_2,\dots,a_{n+m})=
\sum_{j=0}^{\min(k,n)}e_j(a_1,\dots,a_n)\,e_{k-j}(a_{n+1},\dots,a_{n+m}),
\end{equation}
\begin{multline}\label{e1}
e_n(a_1,a_2,\dots,a_{n+m})=
\sum_{k=0}^me_{l-1}(a_1,\dots,a_{l+k-1})\,a_{l+k}\,
e_{n-l}(a_{l+k+1},\dots,a_{n+m}),\\
\text{for a fixed\/ $1\le l\le n$},
\end{multline}
and
\begin{multline}\label{e2}
e_n(a_1,a_2,\dots,a_{n+m})=
\sum_{l=0}^ne_l(a_1,\dots,a_{l+k-1})\,e_{n-l}(a_{l+k+1},\dots,a_{n+m}),\\
\text{for a fixed\/ $1\le k\le m$}.
\end{multline}
\end{subequations}
The identity in \eqref{schure} is another special case
(compare with \eqref{schurh})
of the convolution formula for Schur functions in
\cite[p.~72, Eq.~(5.10)]{M}, for which the indexing partitions
are now reduced to at most one {\em column}.
The other two identities, \eqref{e1} and \eqref{e2}, are most
likely already known as well, although the author
has not been able to find them explicitly in the literature.

\section{Elliptic hypergeometric series}\label{secellbincoeffs}

In this section we concentrate on the so-called {\em elliptic} case.
It was this case which, for the author, served as a motivation
to look out for generalizions of the $q$-commuting variables \eqref{qcomm}.

We explain some important notions from the theory of elliptic
hypergeometric series we will make use of (see also \cite[Ch.~11]{GR}).

Let the modified
Jacobi theta function with argument $x$ and nome $p$ be defined by
\begin{equation*}
\ta(x)=\ta (x; p):= (x; p)_\infty (p/x; p)_\infty\,,\quad\quad
\ta (x_1, \ldots, x_m) = \prod^m_{k=1} \ta (x_k),
\end{equation*}
where $ x, x_1, \ldots, x_m \ne 0,\ |p| < 1,$ and $(x; p)_\infty=
\prod^\infty_{k=0}(1-x p^k)$.

These functions satisfy the following simple properties,
\begin{subequations}
\begin{equation}
\ta(x)=-x\,\ta(1/x),
\end{equation}
\begin{equation}\label{p1id}
\ta(px)=-\frac 1x\,\ta(x),
\end{equation}
\end{subequations}
and the three-term addition formula
(cf.\ \cite[p.~451, Example 5]{WW})
\begin{equation}\label{addf}
\ta(xy,x/y,uv,u/v)-\ta(xv,x/v,uy,u/y)
=\frac uy\,\ta(yv,y/v,xu,x/u).
\end{equation}
The relation \eqref{addf} is not obvious but crucial for the theory
of elliptic hypergeometric series. (Inductive proofs of summation
formulae usually involve functional equations or recursions
which are established by means of the theta addition formula.)
A proof of \eqref{addf} (and more general results)
is given in
\cite[Eq.~(3.4) being a special case of Lemma~3.3]{RS}. 

Now define the {\em theta shifted factorial} (or
{\em $q,p$-shifted factorial}) by
\begin{equation*}
(a;q,p)_n = \begin{cases}
\prod^{n-1}_{k=0} \ta (aq^k),& n = 1, 2, \ldots\,,\cr
1,& n = 0,\cr
1/\prod^{-n-1}_{k=0} \ta (aq^{n+k}),& n = -1, -2, \ldots.
\end{cases}
\end{equation*}
For compact notation, we write
\begin{equation*}
(a_1, a_2, \ldots, a_m;q, p)_n = \prod^m_{k=1} (a_k;q,p)_n.
\end{equation*}
Notice that $\ta (x;0) = 1-x$ and, hence, $(a;q, 0)_n = (a;q)_n$
is a {\em $q$-shifted factorial} in base $q$.

Observe that
\begin{equation}\label{pid}
(pa;q,p)_n=(-1)^na^{-n}q^{-\binom n2}\,(a;q,p)_n,
\end{equation}
which follows from \eqref{p1id}. 
A list of other useful identities for manipulating the
$q,p$-shifted factorials is given in \cite[Sec.~11.2]{GR}.

By definition, a function $g(u)$ is {\em elliptic}, if it is
a doubly-periodic meromorphic function of the complex variable $u$.

Without loss of generality, by the theory of theta functions,
one may assume that
\begin{equation*}
g(u)=\frac{\ta(a_1q^u,a_2q^u,\dots,a_sq^u;p)}
{\ta(b_1q^u,b_2q^u,\dots,b_sq^u;p)}\,z
\end{equation*}
(i.e., an abelian function of some degree $s$), 
for a constant $z$ and some
$a_1,a_2,\dots,a_s$, $b_1,\dots,b_s$, and $p,q$ with $|p|<1$,
where the {\em elliptic balancing condition} (cf.\ \cite{Sp1}), namely
\begin{equation*}
a_1a_2\cdots a_s=b_1b_2\cdots b_s,
\end{equation*}
holds. If one writes $q=e^{2\pi i\sigma}$, $p=e^{2\pi i\tau}$,
with complex $\sigma$, $\tau$, then $g(u)$ is indeed periodic in $u$
with periods $\sigma^{-1}$ and $\tau\sigma^{-1}$.
Keeping this notation for $p$ and $q$,
denote the {\em field of elliptic functions} over $\C$
of the complex variable $u$, meromorphic in $u$
with the two periods $\sigma^{-1}$ and $\tau\sigma^{-1}$ by
$\E_{q^u;q,p}$.

More generally, denote the {\em field of totally elliptic multivariate
functions} over $\C$ of the complex variables $u_1,\dots,u_n$,
meromorphic in each variable with equal periods,
$\sigma^{-1}$ and $\tau\sigma^{-1}$, of double periodicity, by
$\E_{q^{u_1},\dots,q^{u_n};q,p}$.

After these prerequisites, we are ready to turn to our elliptic
generalization of the $q$-binomial coefficient. (The corresponding
elliptic weight function will come out automatically.)
For indeterminants $a$, $b$, complex numbers $q$, $p$ (with $|p|<1$),
and nonnegative integers $n$, $k$,
define the {\em elliptic binomial coefficient} as follows
(this is exactly the expression for $w(\mathcal P((0,0)\to(k,n-k)))$
in \cite[Th.~2.1]{S}):
\begin{equation}\label{ellbc}
\begin{bmatrix}n\\k\end{bmatrix}_{a,b;q,p}:=
\frac{(q^{1+k},aq^{1+k},bq^{1+k},aq^{1-k}/b;q,p)_{n-k}}
{(q,aq,bq^{1+2k},aq/b;q,p)_{n-k}}.
\end{equation}
Note that this definition of the elliptic binomial coefficient
(which reduces to the usual $q$-binomial coefficient
after taking the limits $p\to 0$, $a\to 0$, and $b\to 0$,
in this order) is different from the much simpler one given in
\cite[Eq.~(11.2.61)]{GR}, the latter which is a straightforward
theta shifted factorial extension of the $q$-binomial coefficient
but actually {\em not} elliptic.
In fact, as pointed out in \cite{S}, it is not difficult to see
that the expression in
\eqref{ellbc} is {\em totally elliptic}, i.e.\
elliptic in each of $\log_qa$, $\log_qb$, $k$, and $n$
(viewed as complex parameters), with equal periods of double periodicity, 
which fully justifies the notion ``elliptic''. In particular,
$\left[\begin{smallmatrix}n\\k\end{smallmatrix}\right]_{a,b;q,p}
\in\E_{a,b,q^n,q^k;q,p}$.

It is immediate from the definition of \eqref{ellbc} that (for
integers $n,k$) there holds
\begin{subequations}\label{qbinrel}
\begin{equation}
\begin{bmatrix}n\\0\end{bmatrix}_{a,b;q,p}=
\begin{bmatrix}n\\n\end{bmatrix}_{a,b;q,p}=1,
\end{equation}
and
\begin{equation}
\begin{bmatrix}n\\k\end{bmatrix}_{a,b;q,p}=0,\qquad\text{whenever}\quad
k=-1,-2,\dots,\quad\text{or}\quad k> n.
\end{equation}
Furthermore, using the theta additional formula in \eqref{addf}
one can verify the following recursion formula for the
elliptic binomial coefficients:
\begin{equation}\label{rec}
\begin{bmatrix}n+1\\k\end{bmatrix}_{a,b;q,p}=
\begin{bmatrix}n\\k\end{bmatrix}_{a,b;q,p}+
\begin{bmatrix}n\\k-1\end{bmatrix}_{a,b;q,p}\,W_{a,b;q,p}(k,n+1-k),
\end{equation}
\end{subequations}
for nonnegative integers $n$ and $k$, where
\begin{equation}\label{Wdef}
W_{a,b;q,p}(s,t):=\frac{\ta(aq^{s+2t},bq^{2s},bq^{2s-1},aq^{1-s}/b,aq^{-s}/b)}
{\ta(aq^s,bq^{2s+t},bq^{2s+t-1},aq^{1+t-s}/b,aq^{t-s}/b)}q^t.
\end{equation}

Clearly, $W_{a,b;q,p}(s,0)=1$, for all $s$.
If one lets $p\to 0$, $a\to 0$, then $b\to 0$ (in this order), the
relations in \eqref{qbinrel} reduce to
\begin{equation*}
\begin{bmatrix}n\\0\end{bmatrix}_{q}=
\begin{bmatrix}n\\n\end{bmatrix}_{q}=1,
\end{equation*}
\begin{equation*}
\begin{bmatrix}n+1\\k\end{bmatrix}_{q}=
\begin{bmatrix}n\\k\end{bmatrix}_{q}+
\begin{bmatrix}n\\k-1\end{bmatrix}_{q}\,q^{n+1-k},
\end{equation*}
for positive integers $n$ and $k$ with $n\ge k$, which is
a well-known recursion for the $q$-binomial coefficients.

According to \eqref{w2W} we have for the small weights
\begin{equation}\label{wdef}
w_{a,b;q,p}(s,t):=\frac{W_{a,b;q,p}(s,t)}{W_{a,b;q,p}(s,t-1)}=
\frac{\ta(aq^{s+2t},bq^{2s+t-2},aq^{t-s-1}/b)}
{\ta(aq^{s+2t-2},bq^{2s+t},aq^{t-s+1}/b)}q,
\end{equation}
for $s,t\in\N$.

We refer to $w_{a,b;q,p}(s,t)$ (and to $W_{a,b;q,p}(s,t)$) as an
{\em elliptic weight function}.
Recall that in \cite{S} lattice paths in the integer
lattice $\Z^2$ were enumerated with respect to precisely this
weight function. A similar weight function was subsequently
used by A.~Borodin, V.~Gorin and E.M.~Rains in \cite[Sec.~10]{BGR}
(see in particular the expression obtained for
$\frac{w(i,j+1)}{w(i,j)}$ on p.~780 of that paper)
in the context of weighted lozenge tilings.

\subsection{An elliptic binomial theorem}\label{secell}

For the elliptic case, the commutation relations from
Definition~\ref{defwa} are particularly elegant and can be
formulated as follows. Recall (see the prerequisites we
just covered inbetween Equations \eqref{pid} and \eqref{ellbc})
that $\E_{a,b;q,p}$ denotes the field of totally
elliptic functions over $\C$, in the complex variables 
$\log_qa$ and $\log_qb$, with equal periods $\sigma^{-1}$,
$\tau\sigma^{-1}$ (where $q=e^{2\pi i\sigma}$, $p=e^{2\pi i\tau}$,
$\sigma,\tau\in\C$), of double periodicity.

\begin{definition}\label{defea}
For four noncommuting variables $x,y,a,b$, where $a$ and $b$ commute
with each other, and two complex numbers $q,p$ with $|p|<1$,
let $\C_{a,b;q,p}[x,y]$ denote
the associative unital algebra over $\C$, generated by $x$
and $y$, satisfying the following three relations:
\begin{subequations}\label{defeaeq}
\begin{align}
yx&=\frac{\ta(aq^3,bq,a/bq;p)}{\ta(aq,bq^3,aq/b;p)}qxy,\label{elleq}\\
xf(a,b)&=f(aq,bq^2)x,\label{xf}\\\label{yf}
yf(a,b)&=f(aq^2,bq)y,
\end{align}
\end{subequations}
for all $f\in\E_{a,b;q,p}$.
\end{definition} 
We refer to the variables $x,y,a,b$
forming $\C_{a,b;q,p}[x,y]$
as {\em elliptic-commuting} variables.

Notice that, in comparison with \eqref{defwaeq} the pair of positive
integers $(s,t)$ does not appear explicitly in the commutation relations
\eqref{defeaeq}.
It is easy to verify that the actions of $x$, respectively $y$,
on a weight $w(s,t)$ exactly correspond to shifts of the parameters
$a$ and $b$ as described in \eqref{xf} and \eqref{yf}.

The algebra $\C_{a,b;q,p}[x,y]$ reduces to
$\C_{q}[x,y]$ if one formally lets $p\to 0$, $a\to 0$,
then $b\to 0$ (in this order), while (having eliminated the nome $p$) 
relaxing the condition of ellipticity.

As in \eqref{defwaeq}, the relations in \eqref{defeaeq} are
well-defined as any expression in  $\C_{a,b;q,p}[x,y]$ can be
put in a unique canonical form regardless in which order the
commutation relations are applied for this purpose.

The generic weight-dependent noncommutative binomial theorem
in Theorem~\ref{wdbinth} reduces now to the following:

\begin{theorem}[Elliptic binomial theorem]\label{ebthm}
Let $n\in\N_0$. Then the following identity is valid in $\C_{a,b;q,p}[x,y]$:
\begin{equation}\label{eqbinth}
(x+y)^n=\sum_{k=0}^n\begin{bmatrix}n\\k\end{bmatrix}_{a,b;q,p}x^ky^{n-k}.
\end{equation}
\end{theorem}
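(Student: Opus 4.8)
The plan is to derive Theorem~\ref{ebthm} as a direct specialization of the general weight-dependent binomial theorem (Theorem~\ref{wdbinth}), so that essentially no new inductive work is required. The key observation is that Theorem~\ref{wdbinth} holds in $\C_w[x,y]$ for a \emph{generic} doubly-indexed weight sequence $(w(s,t))_{s,t\in\N}$, with the weight-dependent binomial coefficients ${}_{\stackrel{\phantom w}w}\!\left[\begin{smallmatrix}n\\k\end{smallmatrix}\right]$ determined uniquely by the recursion \eqref{wbineq}. Therefore it suffices to check that under the elliptic specialization the algebra $\C_{a,b;q,p}[x,y]$ of Definition~\ref{defea} is an instance of $\C_w[x,y]$ with $w=w_{a,b;q,p}$, and that the abstract binomial coefficients then coincide with the elliptic binomial coefficients $\left[\begin{smallmatrix}n\\k\end{smallmatrix}\right]_{a,b;q,p}$ of \eqref{ellbc}.

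First I would verify that the three relations \eqref{defeaeq} are exactly the relations \eqref{defwaeq} in disguise. Under the identification of each weight $w(s,t)$ with the elliptic value $w_{a,b;q,p}(s,t)$ from \eqref{wdef}, the shift relations \eqref{xf} and \eqref{yf} say precisely that conjugating a weight by $x$ increments its first index $s$ (equivalently sends $a\mapsto aq$, $b\mapsto bq^2$) and conjugating by $y$ increments its second index $t$ (equivalently sends $a\mapsto aq^2$, $b\mapsto bq$); this matches \eqref{2ndrel} and \eqref{3rdrel} since in \eqref{wdef} the parameter combinations appearing are governed by the exponents of $a$ and $b$. Likewise \eqref{elleq} is just the $(s,t)=(1,1)$ instance of \eqref{1strel}, because setting $s=t=1$ in \eqref{wdef} yields $w_{a,b;q,p}(1,1)=\frac{\ta(aq^3,bq,aq^{-1}/b;p)}{\ta(aq,bq^3,aq/b;p)}q$, which is the scalar on the right-hand side of \eqref{elleq}. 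The remark in the excerpt that ``the actions of $x$, respectively $y$, on a weight $w(s,t)$ exactly correspond to shifts of the parameters $a$ and $b$'' is exactly this correspondence, so I would make it precise and cite the already-established uniqueness of canonical forms.

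Second I would identify the coefficients. The elliptic binomial coefficients satisfy the initial conditions \eqref{qbinrel} and the recursion \eqref{rec} with weight $W_{a,b;q,p}(k,n+1-k)$; these are formally identical to \eqref{recu} and \eqref{recw} defining the generic coefficients ${}_{\stackrel{\phantom w}w}\!\left[\begin{smallmatrix}n\\k\end{smallmatrix}\right]$ under $w\mapsto w_{a,b;q,p}$, since by \eqref{w2W} the big weight $W$ is recovered from the small weight $w$. Because the recursion plus boundary data determine these coefficients uniquely, we conclude
\begin{equation*}
{}_{\stackrel{\phantom w}{\stackrel{\phantom w}w}}\!\!
\begin{bmatrix}n\\k\end{bmatrix}\bigg|_{w=w_{a,b;q,p}}
=\begin{bmatrix}n\\k\end{bmatrix}_{a,b;q,p}.
\end{equation*}
Substituting this into the conclusion \eqref{wdbinthid} of Theorem~\ref{wdbinth} yields \eqref{eqbinth} verbatim, completing the proof.

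The only genuinely nontrivial ingredient is the validity of the recursion \eqref{rec} for the closed-form expression \eqref{ellbc}, which is what forces the use of the theta three-term addition formula \eqref{addf}; but this is precisely the content asserted just after \eqref{Wdef} in the excerpt (``using the theta additional formula in \eqref{addf} one can verify the following recursion''), so I may invoke it as already established. Thus the main obstacle is not in the proof of Theorem~\ref{ebthm} itself but was already absorbed into the verification of \eqref{rec}; the specialization argument here is then purely formal, resting on the uniqueness of the canonical form in $\C_w[x,y]$ and the uniqueness of the solution to the binomial-coefficient recursion. I would take care to record explicitly that the exponent bookkeeping in \eqref{wdef}, \eqref{xf}, and \eqref{yf} is consistent, since a sign or shift error there is the most likely place for the correspondence to break down.
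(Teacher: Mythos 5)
Your proposal is correct and is essentially the paper's own argument: the paper obtains Theorem~\ref{ebthm} precisely as the specialization $w=w_{a,b;q,p}$ of Theorem~\ref{wdbinth}, with the identification of coefficients resting on the recursion \eqref{rec} (already verified via the addition formula \eqref{addf}) and on the correspondence between the relations \eqref{defeaeq} and \eqref{defwaeq}. Your explicit check of the exponent bookkeeping in \eqref{wdef}, \eqref{xf}, \eqref{yf} simply makes precise what the paper dismisses as ``easy to verify.''
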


\begin{remark}
As Erik Koelink has kindly pointed out, a result very similar to
Theorem~\ref{ebthm} has been proved in \cite[Eq.~(3.5)]{KNR},
as an identity in the elliptic $U(2)$ quantum group (or, equivalently,
the $\mathfrak h$-Hopf algebroid $\mathcal F_R(U(2))$). Nevertheless,
although both results involve a ``binomial'' expansion of noncommuting
variables, the correspondence between the two results is not entirely
clear. It is possible however, that such a correspondence would
be easier to make out for another (yet to be established) version
of elliptic binomial theorem in the framework of the more general
situation in Appendix~\ref{appa} with two weight functions $v$ and $w$
(where $v$ and $W$ contribute about the same number of factors).
\end{remark}

\subsection{Frenkel and Turaev's
${}_{10}V_9$ summation}\label{secV109}

{\em Elliptic hypergeometric series} are series $\sum_{k\ge 0}c_k$
where $c_0=1$ and $g(k)=c_{k+1}/c_k$ is an
elliptic function of $k$ with $k$ considered as a complex variable.

Elliptic hypergeometric series first appeared as
elliptic solutions of the Yang--Baxter equation
(or elliptic $6j$-symbols) in work by 
 E.~Date, M.~Jimbo, A.~Kuniba, T.~Miwa, and M.~Okado~\cite{DJKMO}
in 1987, and a decade later by
E.~Frenkel and V.~Turaev~\cite{FT}. The latter authors were the first
to find summation and transformation formulae satisfied by
elliptic hypergeometric series. In particular, by exploiting the
symmetries of the elliptic $6j$-symbols they derived the
(now-called) ${}_{12}V_{11}$ transformation.
By specializing this result they obtained the (now-called)
${}_{10}V_9$ summation (see also \cite[Eq.~(11.4.1)]{GR}),
an identity which is fundamental to the
theory of elliptic hypergeometric series.

\begin{proposition}[Frenkel and Turaev's ${}_{10}V_9$ summation]\label{propft}
Let $n\in\N_0$ and $a,b,c,d,e,q,p\in\C$ with $|p|<1$.
Then there holds the following identity:
\begin{align}\notag
\sum_{k=0}^n\frac{\ta(aq^{2k};p)}{\ta(a;p)}
\frac{(a,b,c,d,e,q^{-n};q,p)_k}
{(q,aq/b,aq/c,aq/d,aq/e,aq^{n+1};q,p)_k}q^k&\\\label{propfteq}
=
\frac{(aq,aq/bc,aq/bd,aq/cd;q,p)_n}
{(aq/b,aq/c,aq/d,aq/bcd;q,p)_n}&,
\end{align}
where $a^2q^{n+1}=bcde$.
\end{proposition}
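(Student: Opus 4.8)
The plan is to obtain \eqref{propfteq} as a specialization of the elliptic binomial theorem, Theorem~\ref{ebthm}, by applying the first convolution formula. Concretely, I would specialize Corollary~\ref{wdconv1} to the elliptic weight function $w_{a,b;q,p}(s,t)$ of \eqref{wdef}, so that every weight-dependent binomial coefficient becomes an elliptic binomial coefficient \eqref{ellbc}, and then read off the resulting single-sum identity as a terminating, very-well-poised elliptic hypergeometric series.

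First I would unravel the \emph{formal} conjugation appearing in \eqref{wdconv1id}. Using the elliptic commutation relations \eqref{xf} and \eqref{yf}, one checks that for any $f\in\E_{a,b;q,p}$ there holds $x^jy^{n-j}f(a,b)y^{j-n}x^{-j}=f(aq^{2n-j},bq^{n+j})$; that is, the operator $x^jy^{n-j}(\,\cdot\,)y^{j-n}x^{-j}$ simply performs the parameter shift $a\mapsto aq^{2n-j}$, $b\mapsto bq^{n+j}$. Applying this to the inner coefficient turns \eqref{wdconv1id} into the purely scalar identity
\begin{equation*}
\begin{bmatrix}n+m\\k\end{bmatrix}_{a,b;q,p}
=\sum_{j=0}^{\min(k,n)}
\begin{bmatrix}n\\j\end{bmatrix}_{a,b;q,p}
\begin{bmatrix}m\\k-j\end{bmatrix}_{aq^{2n-j},bq^{n+j};q,p}
\prod_{i=1}^{k-j}W_{a,b;q,p}(i+j,n-j),
\end{equation*}
valid in $\E_{a,b;q,p}$.

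Next I would put the right-hand side in closed form. The big-weight product $\prod_{i=1}^{k-j}W_{a,b;q,p}(i+j,n-j)$ telescopes, since the arguments of \eqref{Wdef} run through consecutive $q$-shifts as $i$ increases, collapsing into a ratio of theta shifted factorials; substituting this together with the explicit definition \eqref{ellbc} for each of the three elliptic binomial coefficients expresses the entire summand over $j$ as a single ratio of $q,p$-shifted factorials times a power of $q$. Isolating the very-well-poised factor $\ta(aq^{2j};p)/\ta(a;p)$ and identifying the five numerator and denominator pairs, I expect the sum to match the ${}_{10}V_9$ series in \eqref{propfteq} under a dictionary that sends the integers $n,k$ and the parameters $a,b$, together with $q^{-m}$ supplying the terminating factor, to $a,b,c,d,e$; the balancing $a^2q^{n+1}=bcde$ should drop out automatically from the elliptic balancing already built into \eqref{ellbc}.

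Because Corollary~\ref{wdconv1} is proved only for nonnegative integers $n,m,k$, whereas \eqref{propfteq} carries five free continuous parameters, the final step is an analytic-continuation argument: both sides of the scalar convolution are elliptic functions of $\log_qa$, $\log_qb$, and of $k,n,m$ regarded as complex variables, which is precisely the total ellipticity of \eqref{ellbc} recorded just after that equation, and an identity between elliptic functions agreeing on an infinite set of integer points holds identically. Relaxing integrality then liberates enough parameters to recover the generic ${}_{10}V_9$. The main obstacle I anticipate is not conceptual but the bookkeeping of the closed-form step: correctly telescoping \eqref{Wdef}, tracking every $q$-power shift produced by the conjugation, and verifying that the five very-well-poised pairs and the balancing $a^2q^{n+1}=bcde$ emerge with exactly the right exponents rather than as an off-by-a-shift variant.
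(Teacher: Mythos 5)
Your proposal follows essentially the same route as the paper: the paper recovers Proposition~\ref{propft} precisely by specializing Corollary~\ref{wdconv1} to the elliptic weights, which (via the conjugation-as-parameter-shift $a\mapsto aq^{2n-j}$, $b\mapsto bq^{n+j}$ that you correctly identify) yields the scalar identity of Corollary~\ref{corftbinconveq}, and then removes the integrality of $n$ and $m$ by repeated analytic continuation. The only piece you leave as ``bookkeeping'' is the explicit dictionary, which the paper records as the substitution $(a,b,c,d,e,n)\mapsto(bq^{-n}/a,\,q^{-n}/a,\,bq^{1+n+m},\,bq^{-n-m+k}/a,\,q^{-n},\,k)$ in \eqref{propfteq} with summation index $k$ renamed $j$.
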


For $p=0$ the ${}_{10}V_9$ summation reduces to
{\em Jackson's ${}_8\phi_7$ summation}~\cite[Eq.~(II.22)]{GR}.
Interestingly, the ${}_{10}V_9$ stands at the {\em bottom}
of the hierarchy of identities for elliptic hypergeometric series
(as one cannot send parameters of an elliptic hypergeometric series
to zero or infinity due to poles of infinite order).
The systematic study of elliptic hypergeometric series
commenced at about the turn of the millenium, after further
pioneering work of V.P.~Spiridonov and A.S.~Zhedanov~\cite{SZ},
and of S.O.~Warnaar~\cite{W}.

By the elliptic specialization of the
convolution formula in Corollary~\ref{wdconv1} one recovers
Frenkel and Turaev's~\cite{FT} ${}_{10}V_9$ summation
in the following form (where the requirement of $n$ and $m$
being nonnegative integers can be removed by repeated analytic
continuation):

\begin{corollary}\label{corftbinconv}
Let $n,m,k\in\N_0$ and $a,b,q,p\in\C$ with $|p|<1$.
Then there holds the following convolution
formula:
\begin{equation}\label{corftbinconveq}
\begin{bmatrix}n+m\\k\end{bmatrix}_{a,b;q,p}=
\sum_{j=0}^k\begin{bmatrix}n\\j\end{bmatrix}_{a,b;q,p}
\begin{bmatrix}m\\k-j\end{bmatrix}_{aq^{2n-j},bq^{n+j};q,p}
\prod_{i=1}^{k-j}W_{a,b;q,p}(i+j,n-j),
\end{equation}
where the elliptic binomial coefficients and the
weight function $W_{a,b;q,p}$ are defined in \eqref{qbinrel} and \eqref{Wdef}.
\end{corollary}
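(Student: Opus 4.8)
The plan is to specialize the first weight-dependent binomial convolution formula in Corollary~\ref{wdconv1} to the elliptic weight function $w_{a,b;q,p}(s,t)$ defined in \eqref{wdef}, and then to identify the resulting expression as a recognizable form of Frenkel and Turaev's ${}_{10}V_9$ summation. First I would recall that under the elliptic specialization the weight-dependent binomial coefficients ${}_w\!\left[\begin{smallmatrix}n\\k\end{smallmatrix}\right]$ become the elliptic binomial coefficients $\left[\begin{smallmatrix}n\\k\end{smallmatrix}\right]_{a,b;q,p}$ in \eqref{ellbc}, since by \eqref{rec} they satisfy the same defining recursion \eqref{wbineq}. Thus the left-hand side of \eqref{wdconv1id} becomes $\left[\begin{smallmatrix}n+m\\k\end{smallmatrix}\right]_{a,b;q,p}$, and each factor ${}_w\!\left[\begin{smallmatrix}n\\j\end{smallmatrix}\right]$ becomes $\left[\begin{smallmatrix}n\\j\end{smallmatrix}\right]_{a,b;q,p}$. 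The summation range $\min(k,n)$ may be replaced by $k$ since the extra terms vanish when $j>k$.

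The central step is to evaluate the conjugated expression $x^jy^{n-j}\left[\begin{smallmatrix}m\\k-j\end{smallmatrix}\right]_{a,b;q,p}y^{j-n}x^{-j}$ using the shift actions of the elliptic algebra $\C_{a,b;q,p}[x,y]$. Here I would invoke the commutation relations \eqref{xf} and \eqref{yf}: the operator $x$ sends $f(a,b)\mapsto f(aq,bq^2)$ and $y$ sends $f(a,b)\mapsto f(aq^2,bq)$. Applying $y^{n-j}$ first and then $x^j$ to the elliptic function $\left[\begin{smallmatrix}m\\k-j\end{smallmatrix}\right]_{a,b;q,p}$ (viewed as an element of $\E_{a,b;q,p}$) shifts $a\mapsto aq^{j+2(n-j)}=aq^{2n-j}$ and $b\mapsto bq^{2j+(n-j)}=bq^{n+j}$. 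This reproduces precisely the shifted coefficient $\left[\begin{smallmatrix}m\\k-j\end{smallmatrix}\right]_{aq^{2n-j},bq^{n+j};q,p}$ appearing on the right-hand side of \eqref{corftbinconveq}, while the formal inverse $y^{j-n}x^{-j}$ cancels the prefactor as explained in the discussion following Corollary~\ref{wdconv1}. The product $\prod_{i=1}^{k-j}W(i+j,n-j)$ carries over verbatim as $\prod_{i=1}^{k-j}W_{a,b;q,p}(i+j,n-j)$.

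Having assembled \eqref{corftbinconveq}, it remains to recognize it as the ${}_{10}V_9$ summation of Proposition~\ref{propft}. For this I would fix the parameters by writing out the elliptic binomial coefficients via \eqref{ellbc} and the weight $W_{a,b;q,p}$ via \eqref{Wdef} in terms of theta shifted factorials, then match the summand of the $j$-sum against the canonical very-well-poised ${}_{10}V_9$ series in \eqref{propfteq}. The main obstacle will be this final bookkeeping: one must carefully verify that the theta-function ratios combine into the standard form, in particular confirming that the balancing condition $a^2q^{N+1}=bcde$ holds with $N=k$ and with the five upper parameters identified as appropriate monomials in $a,b,q$ (so that $b,c,d,e$ and $q^{-k}$ match the factors coming from $\left[\begin{smallmatrix}n\\j\end{smallmatrix}\right]$, the shifted coefficient, and the weight product). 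The summation-formula nature of the ${}_{10}V_9$ guarantees that the $j$-sum collapses to the single product $\left[\begin{smallmatrix}n+m\\k\end{smallmatrix}\right]_{a,b;q,p}$; conversely, since the derivation of \eqref{corftbinconveq} is self-contained through the binomial theorem and Lemma~\ref{lem}, this yields an independent proof of the ${}_{10}V_9$ summation once the parameter dictionary is established. The removal of the integrality restriction on $n$ and $m$ follows by analytic continuation, both sides being ratios of theta functions and hence meromorphic in the relevant parameters.
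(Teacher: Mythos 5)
Your proposal is correct and takes essentially the same approach as the paper: specialize Corollary~\ref{wdconv1} to the elliptic weight \eqref{wdef}, evaluate the conjugate $x^jy^{n-j}\left[\begin{smallmatrix}m\\k-j\end{smallmatrix}\right]_{a,b;q,p}y^{j-n}x^{-j}$ via the shift relations \eqref{xf} and \eqref{yf} to obtain $\left[\begin{smallmatrix}m\\k-j\end{smallmatrix}\right]_{aq^{2n-j},bq^{n+j};q,p}$, and extend the sum to $j\le k$ (though your justification should read ``$\left[\begin{smallmatrix}n\\j\end{smallmatrix}\right]_{a,b;q,p}=0$ for $j>n$,'' not for $j>k$ --- a trivial slip). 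The identification with Proposition~\ref{propft}, which you leave as parameter bookkeeping, is likewise how the paper treats it --- via the explicit substitution $(a,b,c,d,e,n)\mapsto(bq^{-n}/a,\,q^{-n}/a,\,bq^{1+n+m},\,bq^{-n-m+k}/a,\,q^{-n},\,k)$ in \eqref{propfteq} --- and in any case that step is not needed to prove the corollary as stated.
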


To see the correspondence with Proposition~\ref{propft},
replace the summation index $k$ in Equation~\eqref{propfteq} by $j$
and substitute the $6$-tuple of parameters $(a,b,c,d,e,n)$, appearing in
Equation~\eqref{propfteq} by
$(bq^{-n}/a,q^{-n}/a,bq^{1+n+m},bq^{-n-m+k}/a,q^{-n},k)$.
(This substitution is reversibel if $q^{-n}$ and $q^{-m}$
are treated as complex variables. This is fine,
as the terminating parameter has changed from $n$ to $k$.)
The resulting summation can be written, after
some elementary manipulations of theta shifted factorials,
exactly in the form of Equation \eqref{corftbinconveq}.

Interestingly, Corollaries~\ref{wdconv2} and \ref{wdconv3}
also yield essentially the same result, namely variants
of Frenkel and Turaev's ${}_{10}V_9$ summation. In particular,
from Corollary~\ref{wdconv2} one obtains the following identity:
\begin{equation*}
\begin{bmatrix}n+m\\n\end{bmatrix}_{a,b;q,p}=
\sum_{k=0}^m\begin{bmatrix}k+l-1\\l-1\end{bmatrix}_{a,b;q,p}
\begin{bmatrix}n+m-l-k\\n-l\end{bmatrix}_{aq^{l+2k},bq^{2l+k};q,p}
\prod_{i=0}^{n-l}w_{a,b;q,p}(i+l,k)
\end{equation*}
(which is the $(a,b,c,d,e,n)\mapsto(aq^l,bq^l,aq^{1+n+m},aq^{-n}/b,q^l,m)$
case of Equation~\eqref{propfteq}),
where the requirement of $n$ and $l$ being nonnegative integers
can be removed by repeated analytic continuation.

Whereas, from Corollary~\ref{wdconv3} one obtains
\begin{equation*}
\begin{bmatrix}n+m\\n\end{bmatrix}_{a,b;q,p}=
\sum_{l=0}^n\begin{bmatrix}l+k-1\\l\end{bmatrix}_{a,b;q,p}
\begin{bmatrix}n+m-l-k\\n-l\end{bmatrix}_{aq^{l+2k},bq^{2l+k};q,p}
\prod_{i=1}^{n-l}w_{a,b;q,p}(i+l,k)
\end {equation*}
(which is the ($k\mapsto l$, then)
$(a,b,c,d,e,n)\mapsto(bq^k,aq^k,bq^{1+n+m},bq^{-m}/a,q^k,n)$
case of Equation~\eqref{propfteq}),
where again the requirement of $m$ and $k$ being nonnegative integers
can be removed by repeated analytic continuation.

\begin{appendix}

\section{A generalization involving an additional
weight function}\label{appa}

A substantial amount of the analysis of Section~\ref{secgenw} can be
readily generalized to the situation where one not only has weights
$w(s,t)$ attributed to the horizontal steps but also additional
indeterminate weights $v(s,t)$ on the vertical steps.
More precisely, the weight of a vertical step in the
(first quadrant of the) integer lattice $\Z^2$ from $(s,t-1)$ to
$(s,t)$ shall be $v(s,t)$. 

\centerline{
\unitlength1.2cm
\begin{picture}(0,1.8)
\linethickness{1pt}
\put(0,0.4){\circle*{.1}}
\put(-0.1,0.4){\makebox(0,0)[tr]{{\tiny $(s,t-1)$}}}
\put(0,0.4){\line(0,1){1}}
\put(0,1.4){\circle*{.1}}
\put(-0.1,1.4){\makebox(0,0)[br]{{\tiny $(s,t)$}}}
\put(0.1,.9){\makebox(0,0)[l]{$v(s,t)$}}
\end{picture}}

For instance, the path $P_0$ from Subsection~\ref{secwdbc}
now has the weight
\begin{align*}
w(P_0)=1\cdot 1\cdot v(2,1)\cdot W(3,1)\cdot
W(4,1)\cdot v(4,2)\cdot W(5,2)&\\
=v(2,1)w(3,1)w(4,1)v(4,2)w(5,1)w(5,2)&.
\end{align*}

Keeping the other notions from Section~\ref{secgenw}, let us describe
how the results look like in this generalized setting. 

First we have the following extension of the noncommutative
algebra $\C_w[x;y]$:

\begin{definition}\label{defvwa}
{}For  two doubly-indexed sequences of indeterminates
$(v(s,t))_{s,t\in\N}$ and\linebreak
$(w(s,t))_{s,t\in\N}$ let $\C_{v,w}[x,y]$
be the associative unital algebra over $\C$ generated
by $x$ and $v(0,1)\,y$, satisfying the following five relations:
\begin{subequations}\label{defvwaeq}
\begin{align}
yx&=w(1,1)\,xy,\\
x\,v(s,t)&=v(s+1,t)\,x,\\
x\,w(s,t)&=w(s+1,t)\,x,\\
y\,v(s,t)&=v(s,t+1)\,y,\\
y\,w(s,t)&=w(s,t+1)\,y,
\end{align}
\end{subequations}
for all $(s,t)\in\N^2$.
\end{definition}

As in Subsection~\ref{secwdbc} we define the big weight
$W(s,t)$ to be the product $\prod_{j=1}^tw(s,j)$ of the small $w$-weights.

Let the {\em double weight-dependent binomial coefficients}
be defined by
\begin{subequations}\label{vwbineq}
\begin{align}
&{}_{\stackrel{\phantom w}{\stackrel{\phantom w}v,w}}\!\!
\begin{bmatrix}0\\0\end{bmatrix}=1,\qquad
{}_{\stackrel{\phantom w}{\stackrel{\phantom w}v,w}}\!\!
\begin{bmatrix}n\\k\end{bmatrix}=0
\qquad\text{for $n\in\N_0$, and
$k\in-\N$ or $k>n$},\\
\intertext{and}\label{recvw}
{}_{\stackrel{\phantom w}{\stackrel{\phantom w}v,w}}\!\!
\begin{bmatrix}n+1\\k\end{bmatrix}=
&{}_{\stackrel{\phantom w}{\stackrel{\phantom w}v,w}}\!\!
\begin{bmatrix}n\\k\end{bmatrix}
v(k,n+1-k)
+{}_{\stackrel{\phantom w}{\stackrel{\phantom w}v,w}}\!\!
\begin{bmatrix}n\\k-1\end{bmatrix}
W(k,n+1-k)
\quad \text{for $n,k\in\N_0$}.
\end{align}
\end{subequations}

It is obvious that the double weight-dependent binomial coefficients
${}_{\stackrel{\phantom w}v,w}\!
\left[\begin{smallmatrix}n\\k\end{smallmatrix}\right]$ have again a
nice combinatorial interpretation in terms of weighted lattice paths.
The generating function $w_{v,w}$ with respect to the weights $v$ and $w$
of all paths from $(0,0)$ to $(k,n-k)$ is clearly
\begin{equation}\label{lpvwbc}
w_{v,w}(\mathcal P((0,0)\to (k,n-k)))=
{}_{\stackrel{\phantom w}{\stackrel{\phantom w}v,w}}\!\!
\begin{bmatrix}n\\k\end{bmatrix}.
\end{equation}

The noncommutative binomial theorem in Theorem~\ref{wdbinth}
extends to the following:

\begin{theorem}[Double weight-dependent binomial theorem]\label{vwdbinth}
Let $n\in\N_0$. Then the following identity is valid in $\C_{v,w}[x,y]$:
\begin{equation}\label{vwdbinthid}
(x+v(0,1)\,y)^n=\sum_{k=0}^n\,
{}_{\stackrel{\phantom w}{\stackrel{\phantom w}v,w}}\!\!
\left[\begin{matrix}n\\k\end{matrix}\right]x^ky^{n-k}.
\end{equation}
\end{theorem}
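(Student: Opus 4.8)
The plan is to argue by induction on $n$, closely following the proof of Theorem~\ref{wdbinth}; the only genuinely new ingredient is the bookkeeping of the vertical weights $v$. The crucial observation is that the three relations governing $x$, $y$ and the $w$-weights in Definition~\ref{defvwa} are \emph{identical} to those of $\C_w[x,y]$ in Definition~\ref{defwa}. Consequently Lemma~\ref{lem} and the intermediate identity~\eqref{star} continue to hold verbatim in $\C_{v,w}[x,y]$, since their derivations invoke only the $w$-relations and never touch $v$. The two additional relations $x\,v(s,t)=v(s+1,t)\,x$ and $y\,v(s,t)=v(s,t+1)\,y$ will be used solely to transport the factor $v(0,1)$ to the left.

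The base case $n=0$ is trivial. For the inductive step I would write
\begin{equation*}
(x+v(0,1)\,y)^{n+1}=(x+v(0,1)\,y)^n\,(x+v(0,1)\,y),
\end{equation*}
apply the induction hypothesis to the first factor, and multiply out, obtaining the two sums
\begin{equation*}
\sum_{k=0}^n {}_{v,w}\!\left[\begin{smallmatrix}n\\k\end{smallmatrix}\right]x^ky^{n-k}\,x
\;+\;\sum_{k=0}^n {}_{v,w}\!\left[\begin{smallmatrix}n\\k\end{smallmatrix}\right]x^ky^{n-k}\,v(0,1)\,y.
\end{equation*}
In the first sum I would use identity~\eqref{star}, namely $x^ky^{n-k}x=W(k+1,n-k)\,x^{k+1}y^{n-k}$, and then reindex $k\mapsto k-1$; after extending the summation range using the vanishing of the boundary coefficients this reproduces exactly the $W$-term of the recursion~\eqref{recvw}. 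For the second sum I would commute $v(0,1)$ to the left: applying $y\,v(s,t)=v(s,t+1)\,y$ repeatedly gives $y^{n-k}\,v(0,1)=v(0,n+1-k)\,y^{n-k}$, and then applying $x\,v(s,t)=v(s+1,t)\,x$ repeatedly gives $x^k\,v(0,n+1-k)=v(k,n+1-k)\,x^k$; hence
\begin{equation*}
x^ky^{n-k}\,v(0,1)\,y=v(k,n+1-k)\,x^ky^{n+1-k},
\end{equation*}
which is precisely the $v$-term of~\eqref{recvw}.

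Adding the two contributions and matching them termwise against the recursion~\eqref{recvw} for ${}_{v,w}\!\left[\begin{smallmatrix}n+1\\k\end{smallmatrix}\right]$ then completes the induction. I do not anticipate a genuine obstacle: the argument is structurally the proof of Theorem~\ref{wdbinth} with one extra commutation. The only points requiring care are the two index shifts incurred when $v(0,1)$ passes first through $y^{n-k}$ and then through $x^k$ — these must land on $v(k,n+1-k)$ in order to meet the recursion — and the observation that the generator of $\C_{v,w}[x,y]$ is $v(0,1)\,y$ rather than $y$ alone, which is exactly what correctly weights the leading vertical step of each lattice path, consistent with the combinatorial reading in~\eqref{lpvwbc}.
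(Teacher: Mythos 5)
Your proof is correct and is precisely the ``simple extension of the proof of Theorem~\ref{wdbinth}'' that the paper invokes without writing out: the same induction on $n$, the same use of identity \eqref{star} (valid verbatim since its derivation only uses the $w$-relations) to produce the $W$-term, with the extra $v$-commutations correctly shifting $v(0,1)$ to $v(k,n+1-k)$ to produce the $v$-term of the recursion \eqref{recvw}. The only cosmetic difference is that you expand $(x+v(0,1)\,y)^n(x+v(0,1)\,y)$ forwards and match coefficients against \eqref{recvw}, whereas the paper's model proof applies the recursion to the right-hand side and works backwards; the content is identical.
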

The proof of this theorem is a simple extension of the proof of
Theorem~\ref{wdbinth}.

The double weight-dependent binomial coefficients
${}_{\stackrel{\phantom w}v,w}\!
\left[\begin{smallmatrix}n\\k\end{smallmatrix}\right]$
have the advantage that they also cover 
various (generalizations of) important sequences.
In particular, the ($q$-){\em Stirling numbers of first kind} arise when
$v(s,t)=1-s-t$ (resp., $v(s,t)=(q^{s+t-1}-1)/(1-q)$) and
$w(s,t)=1$, for all $s,t\in\Z$, whereas
the ($q$-){\em Stirling number of second kind} arise
when $v(s,t)=s$ (resp., $v(s,t)=(1-q^s)/(1-q)$)
and $w(s,t)=1$, for all $s,t\in\Z$.

Notice that the algebra $\C_{v,w}[x,y]$ is not very interesting
when $w(s,t)=1$ for all $s,t\in\Z$.
It is certainly worthwhile to look for nontrivial (and ``nice'')
applications of Theorem~\ref{vwdbinth} for suitable choices
of the weight functions $v$ and $w$ (where none of them
is the identity function). This is not pursued further here,
the focus being laid on symmetric functions, elliptic hypergeometric
series and some basic hypergeometric specializations.

\section{Basic hypergeometric specializations}\label{appb}

Here some particularly attractive specializations of 
the elliptic weights $w_{a,b;q,p}(s,t)$ from
Section~\ref{secellbincoeffs} are considered.
The corresponding binomial coefficients and associated
commutation relations are given explicitly, while the summations
that are obtained by convolution are identified.

For some standard terminology related to basic hypergeometric series,
in particular the terms {\em balanced}, {\em well-poised}, 
{\em very-well-poised}, and the definition of an
{\em ${}_r\phi_s$ basic hypergeometric series}, the reader
is kindly referred to the classic text book \cite{GR}.

\subsection{The  balanced very-well-poised case}
If one specializes the elliptic weight function in \eqref{wdef}
by letting $p\to 0$ one obtains the following weights:
\begin{subequations}
\begin{equation}\label{wdefbalwp}
w_{a,b;q}(s,t)=\frac{(1-aq^{s+2t})(1-bq^{2s+t-2})
(1-aq^{t-s-1}/b)}
{(1-aq^{s+2t-2})(1-bq^{2s+t})(1-aq^{t-s-1}/b)}q,
\end{equation}
the associated big weights being
\begin{equation}\label{Wdefbalwp}
W_{a,b;q}(s,t)=\frac{(1-aq^{s+2t})(1-bq^{2s})(1-bq^{2s-1})
(1-aq^{1-s}/b)(1-aq^{-s}/b)}
{(1-aq^s)(1-bq^{2s+t})(1-bq^{2s+t-1})(1-aq^{1+t-s}/b)(1-aq^{t-s}/b)}q^t.
\end{equation}
\end{subequations}
The corresponding binomial coefficients are
\begin{equation}\label{balwpbc}
\begin{bmatrix}n\\k\end{bmatrix}_{a,b;q}=
\frac{(q^{1+k},aq^{1+k},bq^{1+k},aq^{1-k}/b;q)_{n-k}}
{(q,aq,bq^{1+2k},aq/b;q)_{n-k}},
\end{equation}
where we are using the suggestive compact notation
$(a_1, \ldots, a_m;q)_j = \prod_{l=1}^m(a_l;q)_j$ for products
of $q$-shifted factorials.

Now, in the unital algebra $\C_{a,b;q}[x,y]$ over $\C$ defined by
the following five commutation relations
\begin{subequations}\label{defbalwpaeq}
\begin{align}
yx&=\frac{(1-aq^3)(1-bq)(1-a/bq)}{(1-aq)(1-bq^3)(1-aq/b)}qxy,\\
xa&=qax,\\
xb&=q^2bx,\\
ya&=q^2ay,\\
yb&=qby,
\end{align}
\end{subequations}
the binomial theorem
\begin{equation}
(x+y)^n=\sum_{k=0}^n\begin{bmatrix}n\\k\end{bmatrix}_{a,b;q}x^ky^{n-k}
\end{equation}
holds.
Convolution yields {\em Jackson's balanced very-well-poised terminating
${}_8\phi_7$ summation}~\cite[Appendix~(II.22)]{GR}
(which of course is the $p\to 0$ case
of Frenkel and Turaev's ${}_{10}V_9$ summation,
see Corollary~\ref{corftbinconv} and the two identities
appearing thereafter).

\subsection{The balanced case}\label{appbbal}
If in \eqref{wdefbalwp} one lets $a\to 0$ the following weights
are obtained:
\begin{subequations}
\begin{equation}\label{wdefbal}
w_{0,b;q}(s,t)=\frac{(1-bq^{2s+t-2})}
{(1-bq^{2s+t})}q,
\end{equation}
the associated big weights being
\begin{equation}\label{Wdefbal}
W_{0,b;q}(s,t)=\frac{(1-bq^{2s})(1-bq^{2s-1})}
{(1-bq^{2s+t})(1-bq^{2s+t-1})}q^t.
\end{equation}
\end{subequations}
The corresponding binomial coefficients are
\begin{equation}\label{balbc}
\begin{bmatrix}n\\k\end{bmatrix}_{0,b;q}=
\frac{(q^{1+k},bq^{1+k};q)_{n-k}}
{(q,bq^{1+2k};q)_{n-k}}.
\end{equation}

Now, in the unital algebra $\C_{0,b;q}[x,y]$ over $\C$ defined by
the following three commutation relations
\begin{subequations}\label{defbalaeq}
\begin{align}
yx&=\frac{(1-bq)}{(1-bq^3)}qxy,\\
xb&=q^2bx,\\
yb&=qby,
\end{align}
\end{subequations}
the binomial theorem
\begin{equation}\label{bthbal}
(x+y)^n=\sum_{k=0}^n\begin{bmatrix}n\\k\end{bmatrix}_{0,b;q}x^ky^{n-k}
\end{equation}
holds.
Convolution yields the {\em balanced $q$-Saalsch\"utz summation}
~\cite[Appendix~(II.12)]{GR} which
is a summation for a balanced terminating ${}_3\phi_2$ series.
It is clear that reducing the weight in \eqref{wdefbal} yet further
by letting $b\to 0$ one arrives at the standard $q$-weight
connected to $q$-commuting variables. In this case convolution
gives the $q$-Chu--Vandermonde summation~\cite[Appendix~(II.6)/(II.7)]{GR},
a summation for a terminating ${}_2\phi_1$ series.

\subsection{The very-well-poised case}\label{appbvwp}
If in \eqref{wdefbalwp} one lets $b\to 0$ the following weights
are obtained:
\begin{subequations}
\begin{equation}\label{wdefwp}
w_{a,0;q}(s,t)=\frac{(1-aq^{s+2t})}
{(1-aq^{s+2t-2})}q^{-1},
\end{equation}
the associated big weights being
\begin{equation}\label{Wdefwp}
W_{a,0;q}(s,t)=\frac{(1-aq^{s+2t})}
{(1-aq^s)}q^{-t}.
\end{equation}
\end{subequations}
The corresponding binomial coefficients are
\begin{equation}\label{wpbc}
\begin{bmatrix}n\\k\end{bmatrix}_{a,0;q}=
\frac{(q^{1+k},aq^{1+k};q)_{n-k}}
{(q,aq;q)_{n-k}}q^{k(k-n)}.
\end{equation}

Now, in the unital algebra $\C_{a,0;q}[x,y]$ over $\C$ defined by
the following three commutation relations
\begin{subequations}\label{defwpaeq}
\begin{align}
yx&=\frac{(1-aq^3)}{(1-aq)}q^{-1}xy,\\
xa&=qax,\\
ya&=q^2ay,
\end{align}
\end{subequations}
the binomial theorem
\begin{equation}\label{bthvwp}
(x+y)^n=\sum_{k=0}^n\begin{bmatrix}n\\k\end{bmatrix}_{a,0;q}x^ky^{n-k}
\end{equation}
holds.
Convolution yields the {\em very-well-poised terminating
${}_6\phi_5$ summation}~\cite[Appendix~(II.21)]{GR}.
It is clear that the further $a\to\infty$ limit
of \eqref{wdefwp} leads again to the classical case of
$q$-commuting variables (whereas $a\to0$ leads to the same
with $q$ replaced by $q^{-1}$). 

\end{appendix}

\end{document}